\newtheorem{theorem}{Theorem}[section]
\newtheorem{proposition}[theorem]{Proposition}
\newtheorem{lemma}[theorem]{Lemma}
\newtheorem{corollary}[theorem]{Corollary}
\theoremstyle{definition}
\begin{document}
	
\author[P. Danchev]{Peter Danchev}
\address{Institute of Mathematics and Informatics, Bulgarian Academy of Sciences, 1113 Sofia, Bulgaria}
\email{danchev@math.bas.bg; pvdanchev@yahoo.com}	
	
\author[G. Karamali]{Gholamreza Karamali}
\address{Department of Basic Sciences, Shahid Sattari Aeronautical University of Science and Technology, Tehran, Iran}
\email{rezakaramali918@gmail.com}

\author[H. Hosseinnezhad]{Hessam Hosseinnezhad}
\address{Department of Basic Sciences, Shahid Sattari Aeronautical University of Science and Technology, Tehran, Iran}
\email{hosseinnezhad@ssau.ac.ir}

\author[O. Hasanzadeh]{Omid Hasanzadeh}
\address{Department of Mathematics, Tarbiat Modares University, 14115-111 Tehran Jalal AleAhmad Nasr, Iran}
\email{o.hasanzade@modares.ac.ir; hasanzadeomiid@gmail.com}

\title[Some results on triangular coefficient matrix rings]{Some results on triangular \\ coefficient matrix rings}
\keywords{Nil-radical; Jacobson radical; Hurwitz polynomial rings; Triangular coefficient matrix ring}
\subjclass[2010]{16S36, 16S50, 16N40}

\maketitle

\date{today}



\begin{abstract}
In this paper, we introduce the concept of a {\it triangular coefficient matrix ring} and investigate the structure of its ideals. We then characterize the radicals of the ring \( R_{h}[x]/\langle x^{n} \rangle \) for every positive integer \( n \), where \( R_{h}[x] \) denotes the Hurwitz polynomial ring and \( \langle x^{n} \rangle \) represents the ideal of this ring generated by \( x^{n} \). Furthermore, we explore several properties that are transferred between the base ring \( R \) and the matrix ring \( H_{n}(R) \) which is a proper subring of the triangular coefficient matrix ring.
\end{abstract}

\section{Introduction and Basic Notions}

Throughout the present article, all rings are assumed to be associative and possess an identity element. Some ring-theoretical properties of triangular matrix rings have been intensively investigated by many authors (see, for instance, \cite{h1, h2, n1, n2, n3, p1}). In the current paper, we define the notion of a {\it triangular coefficient matrix ring} as the set of all triangular matrices with point-wise addition and multiplication stated by
\[
(a_{ij})(b_{ij}) = (c_{ij}) ~ \text{for all} ~ a_{ij}, b_{ij} \in R,
\]
where
\[
c_{ij} = \sum_{k=i}^{j} \binom{j-i}{k-i} a_{ik}b_{kj},
\]
for each \( i \leq j \), and \( c_{ij} = 0 \) for each \( i > j \). We, hereafter, denote this ring by \( \hat{H}_{n}(R) \). It can easily be verified by concrete technical calculations that the set \( \hat{H}_{n}(R) \) actually forms a ring with identity \( I_{n}(R) \). We, henceforth, refer to this ring as the {\it triangular coefficient matrix ring} (for further details, see the bottom of page 3672 in \cite{4}).

Consider now the following special set of triangular matrices:
\[
H_{n}(R) = \left\{
\begin{pmatrix}
    a_1 & a_2 & a_3 & \cdots & a_n\\
    0 & a_1 & a_2 & \cdots & a_{n-1}\\
    0 & 0 & a_1 & \cdots & a_{n-2}\\
    \vdots  & \vdots  & \vdots  &\ddots & \vdots\\
    0 & 0 & 0 & \cdots & a_1
\end{pmatrix} \mid a_i \in R \right\}.
\]
It is quite evident that \( H_{n}(R) \) is a subring of the triangular coefficient matrix ring. All elements of \( H_{n}(R) \) can be designed by \( (a_{1}, a_{2}, \ldots, a_{n}) \). The operation addition in \( H_{n}(R) \) is performed component-wise, while the multiplication is defined for every two elements \( A = (a_{1}, a_{2}, \ldots, a_{n})\) and \(B = (b_{1}, b_{2}, \ldots, b_{n}) \in H_{n}(R)\) as follows:

\[
(AB)(m) = \sum_{i=0}^{m-1} \binom{m-1}{i} a_{i+1} b_{m-i-1}, \quad \text{for every } 1 \leq m \leq n.
\]

On the other hand, the concept of the {\it Hurwitz series ring} was introduced by Keigher in \cite{7,8} as a variant of the ring of formal power series. Furthermore, Keigher and Pritchard demonstrated in \cite{9} that Hurwitz series have significant applications in differential algebra and the study of weak normalization.

In this vein, the ring \( R_{H}[x] \) of Hurwitz series over a ring \( R \) is defined as follows: the elements of \( R_{H}[x] \) are functions \( f: \mathbb{N} \to R \), where \( \mathbb{N} \) is the set of all non-negative integers. The operation addition in \( R_{H}[x] \) is performed component-wise, whereas the multiplication is defined for each \( f, g \in R_{H}[x] \) thus:
\[
(fg)(m) = \sum_{i=0}^{m} \binom{m}{i} f(i) g(m-i), \quad \text{for every } m \in \mathbb{N}.
\]
It is straightforward to verify via concrete technical computations that the set \( R_{H}[x] \) actually is a ring with identity \( 1_{R} \).

\medskip

Let \( \text{supp}(f) \) denote the support of \( f \in R_{H}[x] \), that is, \[\text{supp}(f) = \{ i \in \mathbb{N} \mid f(i) \neq 0 \} .\] Also, suppose \( \Pi(f) \) designates the minimal element in \( \text{supp}(f) \), and \( \Delta(f) \) designates the maximal element in \( \text{supp}(f) \) if it, certainly, exists. So, the ring \( R_{h}[x] \) of Hurwitz polynomials over \( R \) is a subring of \( R_{H}[x] \) consisting of all elements \( f \in R_{H}[x] \) for which \( \Delta(f) < \infty \).

\medskip

One inspects that there exists a ring isomorphism \( \phi: R_{h}[x] / \langle x^{n} \rangle \to H_{n}(R) \), defined by
\[
\phi\left( \sum_{i=0}^{n-1} a_{i} x^{i} \right) = (a_{0}, a_{1}, a_{2}, \ldots, a_{n-1}),
\]
where \( a_{i} \in R \). Therefore, \( R_{h}[x] / \langle x^{n} \rangle \cong H_{n}(R) \) holds, where \( \langle x^{n} \rangle \) is the ideal of \( R_{h}[x] \) generated by \( x^{n} \).

\medskip

In this note, we classify the ideals of the ring \( H_{n}(R) \), establishing a one-to-one correspondence between the prime and maximal ideals of \( R \) and those of \( H_{n}(R) \). Furthermore, we illustrate in a series of assertions that the ring \( H_{n}(R) \) is local (respectively, semi-local, matrix local, Jacobson, 2-primal, right stable range one, Dedekind finite, abelian, clean, nil clean, 2-good) if, and only if, the former ring \( R \) is local (respectively, semi-local, matrix local, Jacobson, 2-primal, right stable range one, Dedekind finite, abelian, clean, nil clean, 2-good).

In what follows, unless otherwise stated, all notations and terminology are standard and align with the well-known classical book \cite{10}. We use \( N_{0}(R) \), \( N_{\ast}(R) \), \( L\text{-rad}(R) \), \( N^{\ast}(R) \), and \( \text{rad}(R) \) to denote the Wedderburn radical (i.e., the sum of all nilpotent ideals), the lower nil radical (i.e., the prime radical), the Levitzky radical (i.e., the sum of all locally nilpotent ideals), the upper nil radical (i.e., the sum of all nil ideals), and the Jacobson radical of \( R \), respectively. Moreover, the following well-established inclusions hold:
\[
N_{\ast}(R) \subseteq L\text{-rad}(R) \subseteq N^{\ast}(R) \subseteq \text{rad}(R) \cap \text{Nil}(R).
\]

\section{Maximal and Minimal Ideals of Hurwitz Quotient \\ of Polynomial Rings} \label{sec2-maximal}

In this section, we establish a one-to-one correspondence between the maximal and minimal (left or right) ideals of the ring \( R \) and those of \( H_{n}(R) \). As an application, we determine the Jacobson radical and the Brown-McCoy radical of the ring \( H_{n}(R) \) and, consequently, of \( R_{h}[x] / \langle x^{n} \rangle \).

For the readers' convenience and completeness of the exposition, recall that a ring \( R \) is said to be {\it local} if \( R / \text{rad}(R) \) is a division ring, and \( R \) is said to be {\it semi-local} if the quotient-ring \( R / \text{rad}(R) \) is a semi-simple ring or, equivalently, if the factor-ring \( R / \text{rad}(R) \) is a left Artinian ring.

\medskip

We begin with the following statement, which plays a fundamental role for our further work. As usual, for an arbitrary ring $R$, the symbol $U(R)$ is reserved for its unit group (i.e., the group of all invertible elements in $R$).

\begin{lemma}\label{2.1}
Let \( R \) be a ring. Then, \( U(H_{n}(R)) = (U(R), R, \ldots, R) \).
\end{lemma}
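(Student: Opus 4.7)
The plan is to exhibit a surjective ring homomorphism from $H_n(R)$ onto $R$ whose kernel is a nilpotent ideal, after which the statement reduces to the standard fact that invertibility can be detected modulo any nilpotent ideal.

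First, I would verify that the map $\pi : H_n(R) \to R$ defined by $\pi(a_1, a_2, \ldots, a_n) = a_1$ is a surjective ring homomorphism. Component-wise addition makes additivity clear, while the Hurwitz-type multiplication rule reduces at position $m = 1$ to $(AB)(1) = a_1 b_1$, so $\pi(AB) = \pi(A)\pi(B)$; surjectivity is witnessed by the section $a \mapsto (a, 0, \ldots, 0)$. Next, I would prove that $J := \ker \pi = \{(0, a_2, \ldots, a_n) : a_i \in R\}$ is nilpotent, specifically that $J^n = 0$. Introducing the descending chain $J_k := \{(a_1, \ldots, a_n) : a_1 = \cdots = a_k = 0\}$, so that $J = J_1$ and $J_n = \{0\}$, inspection of the sum $(AB)(m) = \sum_i \binom{m-1}{i} a_{i+1} b_{m-i}$ shows that if $A \in J_k$ and $B \in J_\ell$, then every summand vanishes unless $i + 1 > k$ and $m - i > \ell$, which forces $m > k + \ell$. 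Consequently $J_k J_\ell \subseteq J_{k+\ell}$, and iterating gives $J^n \subseteq J_n = \{0\}$.

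With a surjection onto $R$ carrying a nilpotent kernel in hand, the conclusion is routine. The forward direction $U(H_n(R)) \subseteq (U(R), R, \ldots, R)$ is immediate, since $\pi$ sends units to units. For the converse, given $a_1 \in U(R)$, I would take the naive lift $A' = (a_1^{-1}, 0, \ldots, 0) \in H_n(R)$ and observe that $AA' - (1, 0, \ldots, 0)$ lies in $J$, hence is nilpotent; therefore $AA'$ is a unit in $H_n(R)$, with explicit inverse given by a finite geometric series. This provides $A$ with a right inverse, and the symmetric argument produces a left inverse, which must then coincide.

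The main technical point is the bookkeeping required to verify $J^n = 0$: although the binomial-weighted convolution in $H_n(R)$ is tame, one has to carefully track how strings of leading zeros propagate under products. A purely computational alternative would be to solve $AB = (1, 0, \ldots, 0)$ directly by recursion, setting $b_1 = a_1^{-1}$ and, for $m \geq 2$, $b_m = -a_1^{-1} \sum_{i=1}^{m-1} \binom{m-1}{i} a_{i+1} b_{m-i}$, and analogously constructing a left inverse. The abstract approach via the nilpotent kernel is preferable because the nilpotency of $J$ will very likely be reused when computing the Jacobson and Brown--McCoy radicals of $H_n(R)$ in the section that follows.
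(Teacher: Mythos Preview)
Your proof is correct but takes a genuinely different route from the paper's. The paper establishes the inclusion $(U(R),R,\ldots,R)\subseteq U(H_n(R))$ by an explicit recursive construction of the inverse: given $a_1\in U(R)$ with two-sided inverse $b_1$, it sets
\[
b_j=-b_1\sum_{k=2}^{j}\binom{j-1}{k-1}a_k\,b_{j-k+1}\qquad(2\le j\le n)
\]
and then checks $(a_i)(b_j)=(b_j)(a_i)=I_n$ directly. This is exactly the ``purely computational alternative'' you sketch at the end of your proposal. Your main argument instead factors through the surjection $\pi:H_n(R)\to R$, shows $\ker\pi$ is nilpotent via the filtration $J_kJ_\ell\subseteq J_{k+\ell}$, and then invokes the general principle that units lift modulo a nil ideal. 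Both approaches are short; the paper's gives an explicit closed-form inverse, while yours isolates the structural fact $J^n=0$, which is indeed reused later (e.g., in the proof of Theorem~\ref{2.3} and in the radical computations), so your instinct about its utility is well founded.
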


\begin{proof}
If \( (a_{1}, a_{2}, \ldots, a_{n}) \in U(H_{n}(R)) \), then it is readily to verify that \( a_{1} \in U(R) \). To show validity of the inclusion \( (U(R), R, \ldots, R) \subseteq U(H_{n}(R)) \), assume that \( (a_1, a_2, \ldots, a_n) \in H_{n}(R) \) with \( a_{1} \in U(R) \). Hence, there exists \( b_1 \in R \) such that \( a_{1}b_{1} = 1 \) and \( b_{1}a_{1} = 1 \). Defining \( (b_{j}) = (b_{1}, b_{2}, \ldots, b_{n}) \), for each \( 2 \leq j \leq n \), we deduce
\[
b_{j} = -b_{1} \sum_{k=2}^{j} \binom{j-1}{k-1} a_{k} b_{j-k+1}.
\]
However, by definition, we have \( (a_{i})(b_{j}) = (b_{j})(a_{i}) = I_{n} \), thus proving the claim.
\end{proof}

As an immediate consequence, we yield the following.

\begin{corollary}\label{2.2}
Let \( R \) be a ring. Then, the equality is true: \[ U(R_{h}[x] / \langle x^{n} \rangle) = U(R) + Rx + \cdots + Rx^{n-1} + \langle x^{n} \rangle .\]
\end{corollary}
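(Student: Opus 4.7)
The plan is to transport Lemma \ref{2.1} through the ring isomorphism $\phi: R_{h}[x]/\langle x^{n}\rangle \to H_{n}(R)$ defined in the introduction by
\[
\phi\!\left(\sum_{i=0}^{n-1} a_{i} x^{i} + \langle x^{n}\rangle\right) = (a_{0}, a_{1}, \ldots, a_{n-1}).
\]
Since $\phi$ is a ring isomorphism, it restricts to a group isomorphism of unit groups, that is, $\phi\bigl(U(R_{h}[x]/\langle x^{n}\rangle)\bigr) = U(H_{n}(R))$.

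The next step is to apply Lemma \ref{2.1}, which gives $U(H_{n}(R)) = (U(R), R, \ldots, R)$, meaning an $n$-tuple is a unit precisely when its first coordinate lies in $U(R)$ while the remaining coordinates are unrestricted. Pulling back through $\phi^{-1}$, this exactly corresponds to the set of cosets $\sum_{i=0}^{n-1} a_{i} x^{i} + \langle x^{n}\rangle$ for which $a_{0} \in U(R)$ and $a_{1}, \ldots, a_{n-1} \in R$ are arbitrary. Rewriting this set in additive notation yields $U(R) + Rx + \cdots + Rx^{n-1} + \langle x^{n}\rangle$, which is the desired equality.

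There is no real obstacle here: the entire argument is a formal translation via the isomorphism $\phi$, and the only substantive content is already contained in Lemma \ref{2.1}. The minor point to flag is that the expression $U(R) + Rx + \cdots + Rx^{n-1} + \langle x^{n}\rangle$ should be read as a subset of $R_{h}[x]/\langle x^{n}\rangle$ (equivalently, as representatives modulo $\langle x^{n}\rangle$), since $U(R)$ is not closed under addition; the equality is an equality of sets, not of additive subgroups.
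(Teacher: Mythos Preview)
Your proof is correct and is exactly the argument the paper has in mind: the corollary is stated in the paper as an immediate consequence of Lemma \ref{2.1}, with no further proof given, and the only way to read that is via the isomorphism $\phi$ you invoke. Your added remark about interpreting $U(R) + Rx + \cdots + Rx^{n-1} + \langle x^{n}\rangle$ as a set of cosets (not an additive subgroup) is a welcome clarification that the paper leaves implicit.
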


We are now prepared to prove our first main statement.

\begin{theorem}\label{2.3}
Let \( R \) be a ring. A left (resp., right) ideal \( M \) of the ring \( H = H_{n}(R) \) is a maximal left (resp., right) ideal if, and only if, \( M = (M_{0}, R, \ldots, R) \) for some maximal left (resp., right) ideal \( M_{0} \) of \( R \).
\end{theorem}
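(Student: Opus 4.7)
The plan is to exploit the natural ring surjection $\pi : H_{n}(R) \twoheadrightarrow R$ sending $(a_{1},a_{2},\ldots,a_{n}) \mapsto a_{1}$. This is a ring homomorphism because the first coordinate of any Hurwitz-type product $AB$ in $H_{n}(R)$ reduces to $a_{1}b_{1}$ (only the $i=0$ term in the convolution formula survives when $m=1$). Its kernel is precisely $K := (0,R,R,\ldots,R)$, and $H_{n}(R)/K \cong R$ as rings.

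The first key step is to show that $K \subseteq \mathrm{rad}(H_{n}(R))$. For any $k=(0,k_{2},\ldots,k_{n}) \in K$ and any $h \in H_{n}(R)$, the product $hk$ again lies in $K$ (its first coordinate is $h_{1}\cdot 0 = 0$), so $1_{H}-hk$ has first coordinate $1 \in U(R)$. Lemma~\ref{2.1} then gives $1_{H}-hk \in U(H_{n}(R))$. By the standard characterization of the Jacobson radical, $k \in \mathrm{rad}(H_{n}(R))$, and hence $K$ lies inside every maximal (left or right) ideal of $H_{n}(R)$.

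For the converse direction, given a maximal left ideal $M_{0}$ of $R$, set $M := \pi^{-1}(M_{0}) = (M_{0},R,\ldots,R)$. This is obviously a left ideal (the first coordinate of an $H$-left-multiple of an element of $M$ stays inside $M_{0}$ because $M_{0}$ is left-closed in $R$, and the remaining coordinates have no constraint). Since $\pi$ is surjective, there is a left $H_{n}(R)$-module isomorphism $H_{n}(R)/M \cong R/M_{0}$ (the $H$-action factoring through $\pi$), which is simple; thus $M$ is maximal.

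For the forward direction, let $M$ be any maximal left ideal of $H_{n}(R)$. By Step~1 we have $K \subseteq M$, so the lattice isomorphism theorem applied to $\pi$ identifies $M$ with a maximal left ideal $M_{0} := \pi(M)$ of $R$, and $M = \pi^{-1}(M_{0}) = (M_{0},R,\ldots,R)$. The right-ideal case is entirely symmetric, using the right-sided version of Lemma~\ref{2.1} (which holds by the same argument). The whole proof rests on Lemma~\ref{2.1}; once that is available, no genuine obstacle remains, as everything reduces to the correspondence theorem for the surjection $\pi$.
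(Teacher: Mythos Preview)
Your proof is correct and shares the paper's overall architecture: both arguments rest on the fact that the kernel $K=(0,R,\ldots,R)$ of the first-coordinate map is contained in $\mathrm{rad}(H_n(R))$, after which the correspondence between left ideals of $H_n(R)$ containing $K$ and left ideals of $R$ does the work. The details differ in two places. First, the paper obtains $K\subseteq\mathrm{rad}(H_n(R))$ from the nilpotency $K^{n}=0$, whereas you deduce it from Lemma~\ref{2.1} via the quasi-regularity criterion; both are valid, and the nilpotency route avoids appealing to Lemma~\ref{2.1} at this stage. Second, for the direction ``$M_{0}$ maximal $\Rightarrow (M_{0},R,\ldots,R)$ maximal'', the paper exhibits explicit elements witnessing $M+H(a_i)=H$, while you argue through the $H$-module isomorphism $H/M\cong R/M_{0}$ induced by $\pi$. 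Your packaging via the surjection $\pi$ and the lattice isomorphism theorem is a bit more conceptual and avoids the element bookkeeping, but the two proofs are close variants of one another rather than genuinely different approaches.
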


\begin{proof}
We shall treat only the case of "left ideals" as the other one follows in a way of similarity. To that goal, let \( M_{0} \) be a maximal left ideal of \( R \), and let \( M = (M_{0}, R, \ldots, R) \). We show that \( M \) is a maximal left ideal of \( H \). In fact, it is easy to verify that \( M \) is a left ideal of \( H \). Let \( (a_{i}) \in H \setminus M \). We need to establish that \( M + (a_{i})H = H \). Since \( a_{1} \notin M_{0} \), we derive \( M_{0} + a_{1}R = R \). Thus, \( m + a_{1}r = 1 \) for some \( r \in R \) and \( m \in M_{0} \). Define \( (b_{j}) \in M \) as \( (b_{j}) = (m, -a_{2}r, -a_{3}r, \ldots, -a_{n}r) \). So,

\begin{eqnarray*}
(b_{j}) + (a_{j})(rI_{n}) &=& (m, -a_{2}r, -a_{3}r, \ldots, -a_{n}r) + (a_{1}r, a_{2}r, a_{3}r, \ldots, a_{n}r) \\
    &=& (m + a_{1}r, 0, \ldots, 0) = (1, 0, \ldots, 0).
\end{eqnarray*}

This ensures the desired equality that \( M + (a_{i})H = H \), proving that \( M \) is a maximal left ideal of \( H \), as required.

Conversely, assume \( M \) is a maximal left ideal of \( H \). Putt \( A := \{(a_{i}) \in H \mid a_{1} = 0 \} \). Since \( A^{n} = 0 \), we infer \( A \subseteq \text{rad}(R) \subseteq M \). Set \[ M_0 := \{m \in R \mid (m, m_2, \ldots, m_n) \in M \} .\] Therefore, \( M = (M_{0}, R, \ldots, R) \). To receive that \( M_0 \) is a maximal left ideal of \( R \), given \( M_0 \subseteq N \) for some left ideal \( N \) of \( R \). Consequently, \( M = (M_0, R, \ldots, R) \subseteq (N, R, \ldots, R) \). But, since \( M \) is maximal, \( M_0 \) must also be maximal, as requested.
\end{proof}

As a direct consequence, we extract the following.

\begin{corollary}\label{2.4}
Let \( R \) be a ring. A left (resp., right) ideal \( M \) of the ring \( R_{h}[x] / \langle x^{n} \rangle \) is a maximal left (resp., right) ideal if, and only if, \[ M = M_{0} + Rx + \cdots + Rx^{n-1} + \langle x^{n} \rangle ,\] for some maximal left (resp., right) ideal \( M_{0} \) of \( R \).
\end{corollary}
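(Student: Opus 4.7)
The plan is to reduce the statement to Theorem~\ref{2.3} by transporting the lattice of left (resp., right) ideals of $R_h[x]/\langle x^n\rangle$ to that of $H_n(R)$ through the ring isomorphism $\phi$ introduced just before the section. Since $\phi$ is a ring isomorphism, it induces a bijection between the lattices of left ideals (and, separately, of right ideals), and bijections of this kind preserve maximality.

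First I would make the image of the proposed ideal explicit. For a subset $M_0 \subseteq R$, the element $\sum_{i=0}^{n-1} a_i x^i + \langle x^n\rangle$ lies in $M_0 + Rx + \cdots + Rx^{n-1} + \langle x^n\rangle$ precisely when $a_0 \in M_0$ and $a_1,\ldots,a_{n-1} \in R$ are arbitrary. Under $\phi$, which sends $\sum_{i=0}^{n-1} a_i x^i + \langle x^n\rangle$ to $(a_0,a_1,\ldots,a_{n-1})$, this subset corresponds exactly to the tuple set $(M_0, R, \ldots, R)$ inside $H_n(R)$. Hence
\[
\phi\bigl(M_0 + Rx + \cdots + Rx^{n-1} + \langle x^n\rangle\bigr) = (M_0, R, \ldots, R).
\]

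Next I would invoke Theorem~\ref{2.3}. For the "if" direction, starting from a maximal left (resp., right) ideal $M_0$ of $R$, Theorem~\ref{2.3} tells us that $(M_0,R,\ldots,R)$ is a maximal left (resp., right) ideal of $H_n(R)$; pulling back along $\phi^{-1}$, the corresponding subset $M_0 + Rx + \cdots + Rx^{n-1} + \langle x^n\rangle$ is a maximal left (resp., right) ideal of $R_h[x]/\langle x^n\rangle$. For the "only if" direction, a maximal left (resp., right) ideal $M$ of $R_h[x]/\langle x^n\rangle$ maps under $\phi$ to a maximal left (resp., right) ideal of $H_n(R)$, which by Theorem~\ref{2.3} has the form $(M_0,R,\ldots,R)$ for some maximal left (resp., right) ideal $M_0$ of $R$; transporting back via $\phi^{-1}$ gives $M = M_0 + Rx + \cdots + Rx^{n-1} + \langle x^n\rangle$, as claimed.

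There is no genuine obstacle here: the whole argument is a formal transport of structure, and the only item requiring care is bookkeeping the bijection of ideals under $\phi$ and matching the explicit descriptions on both sides. For this reason the proof can be given in one short paragraph appealing directly to Theorem~\ref{2.3}.
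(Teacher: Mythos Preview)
Your proposal is correct and matches the paper's own treatment: Corollary~\ref{2.4} is stated there as an immediate consequence of Theorem~\ref{2.3} via the isomorphism $\phi: R_h[x]/\langle x^n\rangle \to H_n(R)$, exactly as you argue. The only content is the structure transport you describe, so your one-paragraph justification is entirely appropriate.
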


According to Lemma \ref{2.1} (see also Theorem \ref{2.3}), we can determine the Jacobson radical of \( H_n(R) \) in the following two ways.

\begin{corollary}\label{2.5}
Let \( R \) be a ring. Then, \( \text{rad}(H_{n}(R)) = (\text{rad}(R), R, \ldots, R) \).
\end{corollary}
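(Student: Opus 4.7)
The plan is to read the identity off directly from the preceding two results, essentially as the author's own phrasing (``in the following two ways'') already signals. No separate argument is needed.

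The first route uses Theorem~\ref{2.3}. Recall that $\mathrm{rad}(H)$ equals the intersection of all maximal left ideals of $H := H_n(R)$. By Theorem~\ref{2.3}, this indexing set is in bijection with the collection of maximal left ideals $M_0$ of $R$, via $M_0 \mapsto (M_0, R, \ldots, R)$. Hence
\[
\mathrm{rad}(H) = \bigcap_{M_0} (M_0, R, \ldots, R) = \Bigl( \bigcap_{M_0} M_0,\; R,\; \ldots,\; R \Bigr) = (\mathrm{rad}(R), R, \ldots, R),
\]
since the intersection of sets of tuples whose last $n-1$ coordinates are all of $R$ clearly retains $R$ in those coordinates while intersecting the first coordinates.

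The second route uses Lemma~\ref{2.1} together with the characterization $a \in \mathrm{rad}(H) \iff 1_H - xa \in U(H)$ for all $x \in H$. Writing $a = (a_1, \ldots, a_n)$ and $x = (x_1, \ldots, x_n)$, the element $1_H - xa$ has first coordinate $1 - x_1 a_1$; by Lemma~\ref{2.1} it is a unit in $H$ iff this first coordinate is a unit in $R$. Quantifying over all $x_1 \in R$ (letting the other $x_i$ be arbitrary and in particular zero) yields $a_1 \in \mathrm{rad}(R)$ with no restriction on $a_2, \ldots, a_n$, which gives the stated equality.

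There is no real obstacle here; the only thing to check is that the intersection/equivalence passes coordinate-wise, which is immediate in both routes. I would write out the first route as the formal proof, since it is the most compact, and simply remark that Lemma~\ref{2.1} provides an alternative derivation.
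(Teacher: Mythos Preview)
Your proposal is correct and matches the paper's own proof essentially line for line: your first route is exactly the paper's Method~1 via Theorem~\ref{2.3}, and your second route is the paper's Method~2 via Lemma~\ref{2.1}, just phrased as a single biconditional chain rather than two separate inclusions. Nothing further is needed.
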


\begin{proof}
\textbf{Method 1:} Let \( R \) be an arbitrary ring. Hence, \( \text{rad}(R) = \cap M \), where we runs over all maximal left ideals \( M \) of \( R \). Thus, one inspects that
\[
\text{rad}(H_{n}(R)) = \cap_i (M_{i}, R, \ldots, R) = \left(\cap_i M_{i}, R, \ldots, R \right) = (\text{rad}(R), R, \ldots, R).
\]

\textbf{Method 2:} Assume \( (a_{i}) \in H := H_{n}(R) \), where \( a_{1} \in \text{rad}(R) \). We intend to show that \( (a_{i}) \in \text{rad}(H) \). Let \( (b_{j}) \) be an arbitrary element of \( H \). So, \( 1 - b_{1}a_{1} \in U(R) \), because \( a_{1} \in \text{rad}(R) \). Therefore, owing to Lemma \ref{2.1},
\[
I_{n} - (b_{j})(a_{i}) = (1 - b_{1}a_{1}, \ast, \ldots, \ast) \in U(H).
\]
Thus, \( (a_{i}) \in \text{rad}(H) \), as expected.

Conversely, assume \( (a_{i}) \in \text{rad}(H) \). We show that \( a_{1} \in \text{rad}(R) \). Let \( r \) be an arbitrary element of \( R \). Therefore, \( I_{n} - (rI_{n})(a_{i}) \in U(H) \), implying with the aid of Lemma \ref{2.1} that \( 1 - r a_{1} \in U(R) \). Thus, \( a_{1} \in \text{rad}(R) \), as promised.
\end{proof}

What automatically can be deduced is the following consequence.

\begin{corollary}\label{2.6}
Let \( R \) be a ring. Then, \[\text{rad}(R_{h}[x] / \langle x^{n} \rangle) = \text{rad}(R) + Rx + \cdots + Rx^{n-1} + \langle x^{n} \rangle .\]
\end{corollary}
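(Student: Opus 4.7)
The plan is to reduce Corollary \ref{2.6} directly to Corollary \ref{2.5} via the ring isomorphism
\[
\phi: R_{h}[x] / \langle x^{n} \rangle \longrightarrow H_{n}(R), \qquad \phi\!\left(\sum_{i=0}^{n-1} a_{i} x^{i}\right) = (a_{0}, a_{1}, \ldots, a_{n-1}),
\]
that was exhibited in the introduction. Since the Jacobson radical is preserved under ring isomorphisms, it suffices to identify the preimage under $\phi$ of the right-hand side of Corollary \ref{2.5}.

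First I would record the identity $\text{rad}(R_{h}[x]/\langle x^{n}\rangle) = \phi^{-1}(\text{rad}(H_{n}(R)))$, which is immediate from the fact that $\phi$ is a ring isomorphism. Next, I would invoke Corollary \ref{2.5} to substitute $\text{rad}(H_{n}(R)) = (\text{rad}(R), R, \ldots, R)$ on the right-hand side. Finally, I would unwind the definition of $\phi$: an element $\sum_{i=0}^{n-1} a_{i} x^{i} + \langle x^{n}\rangle$ lies in $\phi^{-1}\bigl((\text{rad}(R), R, \ldots, R)\bigr)$ if and only if $a_{0} \in \text{rad}(R)$ and $a_{1}, \ldots, a_{n-1} \in R$ are arbitrary, which is precisely the statement that it belongs to $\text{rad}(R) + Rx + \cdots + Rx^{n-1} + \langle x^{n}\rangle$.

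There is essentially no obstacle here; the only care needed is to state clearly that the isomorphism $\phi$ sends the coset $a_{0} + \langle x^{n}\rangle$ to $(a_{0}, 0, \ldots, 0)$ and the coset $x^{i} + \langle x^{n}\rangle$ to the tuple with $1$ in the $(i+1)$-st slot and zeros elsewhere, so that the set $\text{rad}(R) + Rx + \cdots + Rx^{n-1} + \langle x^{n}\rangle$ maps bijectively onto $(\text{rad}(R), R, \ldots, R)$. The proof can therefore be written in two or three lines, simply stating that the corollary follows from Corollary \ref{2.5} together with the isomorphism $R_{h}[x]/\langle x^{n}\rangle \cong H_{n}(R)$.
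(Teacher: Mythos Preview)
Your proposal is correct and matches the paper's approach exactly: the paper does not even write out a proof for this corollary, presenting it simply as what ``automatically can be deduced'' from Corollary~\ref{2.5}, i.e., by transporting that result through the isomorphism $\phi: R_{h}[x]/\langle x^{n}\rangle \to H_{n}(R)$ described in the introduction. Your unpacking of the preimage computation is a faithful (and slightly more detailed) rendering of that one-line deduction.
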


Now, for each subset \( I \) of the ring \( R \), we define
\[
\nabla(I) = \{ (a_{1}, a_{2}, \ldots, a_{n}) \in H_{n}(R) \mid a_{1} \in I \}.
\]
Clearly, \( I \) is a left (resp., right) ideal of \( R \) if, and only if, \( \nabla(I) \) is a left (resp., right) ideal of \( H_{n}(R) \).

\medskip

We, thereby, arrive at the following claims.

\begin{lemma}\label{2.7}
Let \( I \) be an ideal of the ring \( R \). Then, the isomorphism \( H_{n}(R) / \nabla(I) \cong R / I \) is fulfilled.
\end{lemma}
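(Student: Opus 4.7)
The plan is to apply the First Isomorphism Theorem by exhibiting a surjective ring homomorphism $\psi: H_n(R) \to R/I$ whose kernel is exactly $\nabla(I)$. The natural candidate is the ``first-coordinate'' projection composed with the canonical quotient map, namely
\[
\psi(a_1, a_2, \ldots, a_n) = a_1 + I.
\]

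First I would verify that $\psi$ is indeed a ring homomorphism. Additivity is immediate since addition in $H_n(R)$ is component-wise. Multiplicativity reduces to checking the first coordinate of a product in $H_n(R)$: from the multiplication rule $(AB)(m) = \sum_{i=0}^{m-1} \binom{m-1}{i} a_{i+1} b_{m-i}$ (equivalently, from the matrix product formula in $\hat{H}_n(R)$ applied to the $(1,1)$-entry, which gives $c_{11} = \binom{0}{0} a_{11} b_{11}$), the first coordinate of $(a_i)(b_j)$ is simply $a_1 b_1$. Hence $\psi((a_i)(b_j)) = a_1 b_1 + I = (a_1 + I)(b_1 + I) = \psi((a_i))\psi((b_j))$. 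The identity $I_n = (1, 0, \ldots, 0)$ maps to $1 + I$, so $\psi$ preserves the identity as well.

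Next, surjectivity of $\psi$ is witnessed by the tuples $(r, 0, \ldots, 0)$ for $r \in R$, which map to $r + I$. For the kernel, an element $(a_1, \ldots, a_n)$ lies in $\ker\psi$ if and only if $a_1 \in I$, which is precisely the defining condition for $\nabla(I)$. Therefore $\ker\psi = \nabla(I)$, and the First Isomorphism Theorem yields $H_n(R)/\nabla(I) \cong R/I$.

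There is no real obstacle here; the only point that requires a moment of care is the multiplicativity of $\psi$, which hinges on the fact that the first coordinate of a product in $H_n(R)$ depends solely on the first coordinates of the factors. Once this is recorded, the isomorphism follows instantly from standard quotient arguments.
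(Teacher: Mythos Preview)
Your proof is correct and follows exactly the same route as the paper: define the first-coordinate map $\psi((a_i)) = a_1 + I$, note it is a surjective ring homomorphism with kernel $\nabla(I)$, and invoke the First Isomorphism Theorem. In fact your write-up is more careful than the paper's, which simply asserts that $\psi$ is a surjective homomorphism without spelling out the multiplicativity check.
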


\begin{proof}
Define the map \( \phi: H_{n}(R) \to R / I \) by \( \phi((a_{i})) = a_{1} + I \). It is plainly verified that \( \phi \) is a surjective homomorphism. Moreover, one sees that
\[
\text{ker}(\phi) = \{ (a_{i}) \in H_{n}(R) \mid a_{1} + I = I \} = \{ (a_{i}) \in H_{n}(R) \mid a_{1} \in I \} = \nabla(I).
\]
Therefore, \( H_{n}(R) / \nabla(I) \cong R / I \), as needed.
\end{proof}

Analogously to the proof of Theorem \ref{2.3}, we can prove the following two assertions.

\begin{proposition}\label{2.8}
The ideal \( M \) of the ring \( H_{n}(R) \) is a maximal ideal if, and only if, \( M = (M_{0}, R, \ldots, R) \) for some maximal ideal \( M_{0} \) of \( R \).
\end{proposition}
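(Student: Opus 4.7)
The plan is to mimic the two-directional argument of Theorem \ref{2.3}, but streamline it by invoking the quotient isomorphism from Lemma \ref{2.7}, which converts statements about maximality of two-sided ideals into statements about simplicity of quotient rings.

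For the \emph{if} direction, assume $M_{0}$ is a maximal ideal of $R$ and set $M = (M_{0}, R, \ldots, R) = \nabla(M_{0})$. First I would note that $\nabla(M_{0})$ is indeed a two-sided ideal of $H_{n}(R)$, since the multiplication formula makes the first coordinate of a product equal to a product of first coordinates, so the ideal property of $M_{0}$ in $R$ transfers directly. Then Lemma \ref{2.7} gives $H_{n}(R)/M \cong R/M_{0}$, and since $M_{0}$ is maximal the right-hand side is a simple ring, hence so is the left-hand side, which means $M$ is maximal in $H_{n}(R)$.

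For the \emph{only if} direction, suppose $M$ is a maximal (two-sided) ideal of $H_{n}(R)$. Exactly as in the proof of Theorem \ref{2.3}, the set $A = \{(a_{i}) \in H_{n}(R) \mid a_{1} = 0\}$ satisfies $A^{n} = 0$, so $A \subseteq \mathrm{rad}(H_{n}(R)) \subseteq M$. Define
\[
M_{0} := \{\, a_{1} \in R \mid (a_{1}, a_{2}, \ldots, a_{n}) \in M \text{ for some } a_{2}, \ldots, a_{n} \in R \,\}.
\]
Because $A \subseteq M$, one checks immediately that $M = \nabla(M_{0})$. The set $M_{0}$ is a two-sided ideal of $R$: if $a_{1} \in M_{0}$ and $r \in R$, then left-multiplying the witnessing element of $M$ by $rI_{n}$ (respectively right-multiplying) produces an element of $M$ whose first coordinate is $ra_{1}$ (respectively $a_{1}r$).

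Finally, to see that $M_{0}$ is maximal, I would apply Lemma \ref{2.7} once more: $H_{n}(R)/M = H_{n}(R)/\nabla(M_{0}) \cong R/M_{0}$, and since $M$ is maximal the quotient $H_{n}(R)/M$ is simple, forcing $R/M_{0}$ to be simple, i.e.\ $M_{0}$ is a maximal ideal of $R$. The main potential obstacle is the verification that $M = \nabla(M_{0})$ (which requires the inclusion $A \subseteq M$), but this is the same ingredient already used successfully in Theorem \ref{2.3}, so the argument should go through essentially without new difficulty.
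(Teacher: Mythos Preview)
Your proof is correct. The paper does not spell out a proof of this proposition but simply says it is proved ``analogously to the proof of Theorem~\ref{2.3}''; the implied argument would therefore mimic that theorem's direct element-chasing (for the \emph{if} direction, show that adjoining any element outside $M$ generates all of $H$; for the \emph{only if} direction, use $A\subseteq M$ to get $M=(M_0,R,\ldots,R)$ and then check maximality of $M_0$ by lifting a strictly larger ideal). Your route keeps the same skeleton in the \emph{only if} direction ($A\subseteq M$, hence $M=\nabla(M_0)$), but replaces the element-wise maximality checks on both sides by an appeal to Lemma~\ref{2.7}: the isomorphism $H_n(R)/\nabla(M_0)\cong R/M_0$ turns maximality of $\nabla(M_0)$ and of $M_0$ into simplicity of one and the same quotient ring. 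This is a genuine, if modest, streamlining---you trade two hands-on verifications for a single already-proved lemma.

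One small point to tighten: you write $A\subseteq\mathrm{rad}(H_n(R))\subseteq M$. In Theorem~\ref{2.3} the second inclusion is immediate because $M$ is a maximal \emph{left} ideal and $\mathrm{rad}$ is the intersection of those. Here $M$ is a maximal \emph{two-sided} ideal, so you should add a word of justification: either observe that $H_n(R)/M$ is a simple ring with identity, hence has zero Jacobson radical, so the image of $\mathrm{rad}(H_n(R))$ vanishes; or, more directly, note that the nilpotent ideal $A$ lies in every prime ideal and that maximal ideals (in a ring with identity) are prime. Either remark closes the gap.
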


\begin{corollary}\label{2.9}
The ideal \( M \) of the ring \( R_{h}[x] / \langle x^{n} \rangle \) is a maximal ideal if, and only if, \( M = M_{0} + Rx + \cdots + Rx^{n-1} + \langle x^{n} \rangle \) for some maximal ideal \( M_{0} \) of \( R \).
\end{corollary}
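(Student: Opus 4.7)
The plan is to deduce this corollary immediately from Proposition \ref{2.8} by transporting the characterization of maximal ideals across the ring isomorphism
\[
\phi\colon R_{h}[x]/\langle x^{n}\rangle \;\xrightarrow{\;\cong\;}\; H_{n}(R), \qquad \phi\!\left(\sum_{i=0}^{n-1} a_{i} x^{i}\right) = (a_{0}, a_{1}, \ldots, a_{n-1}),
\]
that was already recorded in the introduction. Since any ring isomorphism induces a bijection between the lattices of two-sided ideals that preserves inclusion, it also induces a bijection between the sets of maximal ideals. Hence $M$ is a maximal ideal of $R_{h}[x]/\langle x^{n}\rangle$ if, and only if, $\phi(M)$ is a maximal ideal of $H_{n}(R)$.

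The only substantive step is to identify, for a given subset $M_{0}\subseteq R$, the image under $\phi$ of the set
\[
M_{0} + Rx + \cdots + Rx^{n-1} + \langle x^{n}\rangle \subseteq R_{h}[x]/\langle x^{n}\rangle.
\]
A direct inspection shows this image is precisely $(M_{0},R,\ldots,R)\subseteq H_{n}(R)$, since the coset of $\sum_{i=0}^{n-1} a_{i} x^{i}$ lies in the displayed set exactly when $a_{0}\in M_{0}$, with no constraints on $a_{1},\ldots,a_{n-1}$. In particular, the $M$'s of the form stated in the corollary are in bijection with the subsets $M_{0}$ of $R$, and they go under $\phi$ to exactly the subsets of $H_{n}(R)$ of the form appearing in Proposition \ref{2.8}.

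Combining these two bijections, $M$ is a maximal ideal of $R_{h}[x]/\langle x^{n}\rangle$ of the required shape if, and only if, $\phi(M)$ is a maximal ideal of $H_{n}(R)$ of the shape $(M_{0},R,\ldots,R)$, which by Proposition \ref{2.8} holds if, and only if, $M_{0}$ is a maximal ideal of $R$. This chain of equivalences is exactly the content of the corollary.

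There is essentially no obstacle here: the whole argument is a translation through an already-known isomorphism, so the only point requiring genuine verification (and it is routine) is that $\phi$ sends $M_{0}+Rx+\cdots+Rx^{n-1}+\langle x^{n}\rangle$ to $(M_{0},R,\ldots,R)$. Alternatively, if one wished to avoid invoking Proposition \ref{2.8}, one could mimic the proof of Theorem \ref{2.3} inside $R_{h}[x]/\langle x^{n}\rangle$ directly, using Corollary \ref{2.2} to produce a unit witnessing maximality in the forward direction, and using the observation that $(x)/\langle x^{n}\rangle$ is nilpotent (hence contained in every maximal ideal) to extract $M_{0}$ in the reverse direction; but the proposed reduction via $\phi$ is shorter and cleaner.
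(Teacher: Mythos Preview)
Your proof is correct and follows precisely the paper's intended route: the paper states Proposition \ref{2.8} and Corollary \ref{2.9} together without explicit proof, indicating that Corollary \ref{2.9} is to be read as the direct translation of Proposition \ref{2.8} through the isomorphism $\phi$, exactly as Corollary \ref{2.4} is obtained from Theorem \ref{2.3}. Your write-up makes this transport explicit and is, if anything, more detailed than what the paper itself provides.
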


Traditionally, the Brown-McCoy radical \( \text{rad}'(R) \) of \( R \) is the intersection of all maximal ideals of \( R \). Now, in regard to Proposition \ref{2.8}, we can connect the Brown-McCoy radical of the ring \( R \) and the radical of the ring \( H_{n}(R) \) as recorded below.

\begin{corollary}\label{2.10}
Let \( R \) be a ring. Then, we have the following two statements:\\
\text{(1)}\;\; \( \text{rad}'(H_{n}(R)) = (\text{rad}'(R), R, \ldots, R) \).\\
\text{(2)}\;\; \( \text{rad}'(R_{h}[x] / \langle x^{n} \rangle) = \text{rad}'(R) + Rx + \cdots + Rx^{n-1} + \langle x^{n} \rangle \).
\end{corollary}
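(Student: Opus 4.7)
The plan is to mimic Method 1 of the proof of Corollary~\ref{2.5}, replacing the role of maximal one-sided ideals (and Theorem~\ref{2.3}) by that of maximal two-sided ideals (and Proposition~\ref{2.8}).

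For part (1), I would start from the definition
\[
\text{rad}'(H_{n}(R)) = \bigcap_{M} M,
\]
where $M$ runs over all maximal (two-sided) ideals of $H_{n}(R)$. By Proposition~\ref{2.8}, there is a bijection between such $M$ and the maximal ideals $M_{0}$ of $R$, given by $M = (M_{0}, R, \ldots, R)$. Thus
\[
\text{rad}'(H_{n}(R)) = \bigcap_{M_{0}} (M_{0}, R, \ldots, R) = \Bigl(\bigcap_{M_{0}} M_{0},\, R, \ldots, R\Bigr) = (\text{rad}'(R), R, \ldots, R),
\]
where the middle equality follows by reading off the componentwise description of $\nabla$-type subsets: an element $(a_{1}, a_{2}, \ldots, a_{n})$ lies in every $(M_{0}, R, \ldots, R)$ precisely when $a_{1}$ lies in every $M_{0}$.

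For part (2), I would just transport the result along the ring isomorphism $\phi \colon R_{h}[x]/\langle x^{n}\rangle \to H_{n}(R)$ introduced in Section~1, under which $a_{0} + a_{1}x + \cdots + a_{n-1}x^{n-1} + \langle x^{n}\rangle$ corresponds to $(a_{0}, a_{1}, \ldots, a_{n-1})$. Since ring isomorphisms preserve the lattice of (two-sided) ideals, and in particular the Brown-McCoy radical, part (1) gives
\[
\text{rad}'(R_{h}[x]/\langle x^{n}\rangle) = \phi^{-1}\bigl((\text{rad}'(R), R, \ldots, R)\bigr) = \text{rad}'(R) + Rx + \cdots + Rx^{n-1} + \langle x^{n}\rangle.
\]

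No real obstacle is expected here: the work has already been done in Proposition~\ref{2.8}, so this corollary is a pure bookkeeping step, entirely parallel to how Corollary~\ref{2.5} was deduced from Theorem~\ref{2.3}. The only mild point of care is to check that intersections of ideals commute with the $\nabla$-type/componentwise description (i.e.\ that $\bigcap_{i}(M_{i}^{0}, R, \ldots, R) = (\bigcap_{i} M_{i}^{0}, R, \ldots, R)$), which is immediate from the explicit definition of these ideals.
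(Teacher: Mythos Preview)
Your proposal is correct and matches the paper's intended argument: the paper does not spell out a proof for Corollary~\ref{2.10} but simply invokes Proposition~\ref{2.8}, which is exactly the input you use, and your argument is the obvious analogue of Method~1 in Corollary~\ref{2.5}. There is nothing to add.
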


We also want to document the following preliminary technicality.

\begin{lemma}\label{2.11}
Let \( R \) be a ring. Then, the isomorphism \( H_{n}(R) / \text{rad}(H_{n}(R)) \cong R / \text{rad}(R) \) holds.
\end{lemma}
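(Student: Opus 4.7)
The plan is to derive this isomorphism as an immediate consequence of the two ingredients already established, namely the explicit description of $\text{rad}(H_{n}(R))$ given in Corollary \ref{2.5} together with the quotient identification of Lemma \ref{2.7}.

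The first step is to observe that the tuple notation $(\text{rad}(R), R, \ldots, R)$ appearing in Corollary \ref{2.5} coincides precisely with $\nabla(\text{rad}(R))$ as defined just before Lemma \ref{2.7}, since both describe the same subset of $H_{n}(R)$, namely the set of tuples $(a_{1}, a_{2}, \ldots, a_{n})$ whose leading coordinate lies in $\text{rad}(R)$. This is a purely notational identification, but it is the crucial bridge between the two preceding results.

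Next, since $\text{rad}(R)$ is a two-sided ideal of $R$, one may apply Lemma \ref{2.7} with $I = \text{rad}(R)$ to conclude
\[H_{n}(R) / \nabla(\text{rad}(R)) \cong R / \text{rad}(R).\]
Substituting the identification from the first step then yields the desired isomorphism $H_{n}(R)/\text{rad}(H_{n}(R)) \cong R/\text{rad}(R)$.

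There is essentially no obstacle here. An alternative direct route would be to define $\phi: H_{n}(R) \to R/\text{rad}(R)$ by $\phi((a_{i})) = a_{1} + \text{rad}(R)$, verify that it is a surjective ring homomorphism (the homomorphism property coming from the fact that the leading coordinate of a product in $H_{n}(R)$ is just the product of the leading coordinates, with the binomial contributions affecting only higher components), and then identify its kernel with $\text{rad}(H_{n}(R))$ via Corollary \ref{2.5}. Either approach reduces to routine bookkeeping once the preceding lemmas are in hand, so the only care required is to note that Corollary \ref{2.5} together with Lemma \ref{2.7} genuinely suffice; no additional structural input is needed.
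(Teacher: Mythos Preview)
Your proof is correct and follows exactly the paper's approach: the paper's own proof is the single sentence ``The claim follows combining Corollary \ref{2.5} and Lemma \ref{2.7}.'' Your explicit remark that $(\text{rad}(R), R, \ldots, R) = \nabla(\text{rad}(R))$ is the notational bridge the paper leaves implicit, and the alternative direct route you mention is precisely the content of Lemma \ref{2.7} specialized to $I = \text{rad}(R)$.
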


\begin{proof}
The claim follows combining Corollary \ref{2.5} and Lemma \ref{2.7}.
\end{proof}

Remember that a ring \( R \) is standardly called {\it local} if \( R / \text{rad}(R) \) is a division ring, and \( R \) is {\it semi-local} if \( R / \text{rad}(R) \) is a left Artinian ring (i.e., a semi-simple ring). Likewise, a ring \( R \) is said to be {\it matrix local} if \( R / \text{rad}(R) \) is a simple Artinian ring.

\medskip

With the preceding lemma at hand, we obtain the following.

\begin{corollary}\label{2.12}
Let \( R \) be a ring. Then, we have the following three statements:\\
\text{(1)}\;\; \( R \) is a local ring if, and only if, \( H_{n}(R) \) is a local ring.\\
\text{(2)}\;\; \( R \) is a semi-local ring if, and only if, \( H_{n}(R) \) is a semi-local ring.\\
\text{(3)}\;\; \( R \) is a matrix local ring if, and only if, \( H_{n}(R) \) is a matrix local ring.
\end{corollary}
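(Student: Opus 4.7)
The plan is to reduce all three parts of the corollary to the single isomorphism supplied by Lemma~\ref{2.11}, namely
\[
H_{n}(R)/\mathrm{rad}(H_{n}(R)) \;\cong\; R/\mathrm{rad}(R).
\]
Each of the three properties in question (local, semi-local, matrix local) has been defined in the text purely as a property of the quotient ring modulo the Jacobson radical: $R$ is local iff $R/\mathrm{rad}(R)$ is a division ring; $R$ is semi-local iff $R/\mathrm{rad}(R)$ is left Artinian (equivalently semi-simple); and $R$ is matrix local iff $R/\mathrm{rad}(R)$ is simple Artinian. Since each of these properties is clearly preserved and reflected by ring isomorphisms, the equivalences will be immediate.

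Concretely, I would structure the proof as three one-line implications, each invoking Lemma~\ref{2.11}. For part (1), observe that $R$ is local iff $R/\mathrm{rad}(R)$ is a division ring iff, via the isomorphism of Lemma~\ref{2.11}, $H_{n}(R)/\mathrm{rad}(H_{n}(R))$ is a division ring iff $H_{n}(R)$ is local. Parts (2) and (3) are handled identically, replacing ``division ring'' by ``left Artinian ring'' and ``simple Artinian ring'', respectively.

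There is essentially no obstacle: the entire substance has been packaged into Lemma~\ref{2.11}, which in turn rests on Corollary~\ref{2.5} (identification of $\mathrm{rad}(H_{n}(R))$) and Lemma~\ref{2.7} (the quotient identification $H_{n}(R)/\nabla(I)\cong R/I$ applied to $I=\mathrm{rad}(R)$). If anything deserves a remark, it is only that the three ring-theoretic properties under consideration are Morita-type invariants of the quotient mod the Jacobson radical, hence transparently transfer across the isomorphism. No case analysis, and no further computation with the Hurwitz-style multiplication, is needed.
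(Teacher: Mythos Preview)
Your proposal is correct and follows exactly the paper's approach: the paper's proof consists of the single sentence ``The proof follows at once from Lemma~\ref{2.11},'' and your argument simply unpacks why that lemma suffices for each of the three parts.
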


\begin{proof}
The proof follows at once from Lemma \ref{2.11}.
\end{proof}

We now concentrate on establishing the following major result.

\begin{theorem}
Let \( R \) be a ring. A left (resp., right) ideal \( I \) of the ring \( H = H_{n}(R) \) is a minimal left (resp., right) ideal if, and only if, \( I = (0, 0, \ldots, 0, I_{0}) \) for some minimal left (resp., right) ideal \( I_0 \) of \( R \).
\end{theorem}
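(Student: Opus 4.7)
The plan is to prove both implications by mirroring the maximal-ideal argument of Theorem \ref{2.3}, but now pushing mass into the \emph{last} coordinate of an element rather than extracting the first. Throughout I will use the multiplication rule $(AB)(m)=\sum_{i=0}^{m-1}\binom{m-1}{i}A_{i+1}B_{m-i}$ that defines $H=H_n(R)$.

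For the \emph{if} direction, let $I_0$ be a minimal left ideal of $R$ and set $I=(0,\ldots,0,I_0)$. Applied to $h=(h_1,\ldots,h_n)\in H$ and $(0,\ldots,0,a)\in I$, only the index pair $m=n$, $i=0$ survives in the product, yielding
\[
h\cdot(0,\ldots,0,a)=(0,\ldots,0,h_1 a),
\]
so $I$ is a left ideal. To verify minimality, take a nonzero left ideal $J\subseteq I$ and set $J_0=\{a\in R:(0,\ldots,0,a)\in J\}$; this is a nonzero left ideal of $R$ contained in $I_0$, and by minimality of $I_0$ we conclude $J_0=I_0$, hence $J=I$.

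For the \emph{only if} direction, let $I$ be a minimal left ideal of $H$ and pick a nonzero $y=(a_1,\ldots,a_n)\in I$. The easy case is $a_1\neq 0$: left-multiplying by the element $(0,\ldots,0,1)$ (which corresponds to $x^{n-1}$ under the isomorphism of Corollary \ref{2.4}) gives $(0,\ldots,0,a_1)\in I$, because in $(AB)(n)$ only the index $i=n-1$ contributes. Then $I=H\cdot(0,\ldots,0,a_1)=(0,\ldots,0,Ra_1)$ by minimality, and the \emph{if} direction forces $Ra_1$ to be a minimal left ideal of $R$.

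The main obstacle is the case where $y$ has no nonzero entry in position~$1$. Let $k$ be the smallest index with $a_k\neq 0$; I would then left-multiply by the element with a $1$ in position $n-k+1$ (that is, $x^{n-k}$), producing
\[
x^{n-k}\cdot y=\Bigl(0,\ldots,0,\binom{n-1}{k-1}a_k\Bigr),
\]
since every lower coordinate of the product involves some $a_j$ with $j<k$. If $\binom{n-1}{k-1}a_k\neq 0$ this reduces to the easy case. The delicate scenario is when the coefficient vanishes in $R$ (for instance in positive characteristic); here I would either combine several shifts $x^{n-k},x^{n-k-1},\ldots$ of $y$ to manufacture a nonzero element of $I\cap(0,\ldots,0,R)$, or exploit minimality of $I$ to replace $y$ from the outset by a generator whose first nonzero coordinate already sits at position~$n$. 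Verifying that one of these recipes succeeds without any extra hypothesis on $R$ is the step I would scrutinize most carefully.
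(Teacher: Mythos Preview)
Your ``if'' direction is correct and is essentially what the paper leaves to the reader. In the ``only if'' direction there are two issues.

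First, your ``easy case'' $a_1\neq 0$ does not terminate the way you describe. Once you have $I=H\cdot(0,\ldots,0,a_1)=(0,\ldots,0,Ra_1)$, your original element $y=(a_1,\ldots,a_n)\in I$ forces $a_1=0$ (for $n\geq 2$), a contradiction. So what this case actually proves is that \emph{no} element of $I$ has nonzero first coordinate. That is precisely how the paper argues: it assumes some $A\in I$ has $a_1\neq 0$, notes $(0,\ldots,0,1)A=(0,\ldots,0,a_1)\in I$, uses minimality to get $I\subseteq(0,\ldots,0,R)$, and derives the contradiction $A\notin I$. The paper then writes that ``by the same argument'' one gets $a_1=a_2=\cdots=a_{n-1}=0$ for every element of $I$.

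Second---and this is the substantive point---your hesitation about the inductive step is fully justified, and the paper's ``same argument'' shares the gap. If every element of $I$ already has first coordinate $0$, then left-multiplying $(0,a_2,\ldots,a_n)\in I$ by $(0,\ldots,0,1)$ gives $0$, while left-multiplying by $(0,\ldots,0,1,0)$ gives $(0,\ldots,0,(n-1)a_2)$; this is exactly the binomial obstruction you identified. It is a genuine obstruction, not a mere technicality: take $R=\mathbb{F}_2$ and $n=3$. Since $\binom{2}{1}=0$ in $\mathbb{F}_2$, the product rule collapses to $(h_1,h_2,h_3)\cdot(0,a,0)=(0,h_1a,0)$, so $J=\{(0,a,0):a\in\mathbb{F}_2\}$ is a two-element (hence minimal) left ideal of $H_3(\mathbb{F}_2)$ that is \emph{not} of the form $(0,0,I_0)$. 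Thus the statement, as written, is false; neither your shift-combinations nor a different choice of generator can rescue it without an additional hypothesis such as $(n-1)!\in U(R)$, under which every $\binom{n-1}{k-1}$ is a unit and both your argument and the paper's iteration go through.
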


\begin{proof}
We shall deal only with the left case, because the right one follows analogically. To that target, let \( I_0 \) be a minimal left ideal of \( R \). It is not too hard to check that \( I = (0, 0, \ldots, 0, I_{0}) \) is a minimal left ideal of \( H \), as stated.

Conversely, assume that \( I \) is a minimal left ideal of \( H \). We show that, for every \( (a_1, a_2, \ldots, a_n) \in I \), it must be that \( a_{1} = 0 \). Assume, for a possible contradiction, that there exists \( A = (a_1, a_2, \ldots, a_n) \in I \) such that \( a_1 \neq 0 \). Put \( I_{1} := \{b \in R \mid (b, b_2, \ldots, b_n) \in I\} \). Apparently, \( I_1 \) is a non-zero left ideal of \( R \). Also, as \( I \) is a left ideal of \( H \), we detect that \[ 0 \neq H(0, \ldots, 0, 1)I_1 \subseteq I .\] However, \( I \) is a minimal left ideal of \( H \), so \( H(0, \ldots, 0, 1)I_1 = I \). But, \( A \notin H(0, \ldots, 0, 1)I_1 \), leading to the wanted contradiction. Thus, \( a_1 = 0 \). By the same argument, one finds that \( a_1 = a_2 = \ldots = a_{n-1} = 0 \). Therefore, \( I_1 = \{b \in R \mid (0, \ldots, 0, b) \in I\} \), and hence \( I_1 \) is a minimal left ideal of \( R \). Finally, \( I = (0, 0, \ldots, 0, I_{0}) \), as formulated.
\end{proof}

As a valuable consequence, we have the following.

\begin{corollary}
Let \( R \) be a ring. Then, the next two statements hold:\\
\text{(1)}\;\; \( \text{soc}(H_{n}(R)_{H_{n}(R)}) = \text{soc}(0, \ldots, 0, \text{soc}(R_{R})) \).\\
\text{(2)}\;\; \( \text{soc}(_{H_{n}(R)}H_{n}(R)) = \text{soc}(0, \ldots, 0, \text{soc}(_{R}R)) \).
\end{corollary}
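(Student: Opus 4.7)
The plan is to read this corollary off directly from the theorem immediately preceding it, which classifies the minimal right (resp.\ left) ideals of $H = H_n(R)$ as precisely the sets $(0, 0, \ldots, 0, I_0)$, where $I_0$ ranges over the minimal right (resp.\ left) ideals of $R$. With that classification in hand, both parts of the corollary become a routine bookkeeping step.

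For part (1), $\text{soc}(H_H)$ is by definition the sum of all minimal right ideals of $H$, so the preceding theorem gives
\[
\text{soc}(H_H) \;=\; \sum_{I_0}\, (0, 0, \ldots, 0, I_0),
\]
where $I_0$ ranges over the minimal right ideals of $R$. Since addition in $H_n(R)$ is component-wise, I may pull the sum inside the last coordinate and obtain
\[
\text{soc}(H_H) \;=\; \bigl(0, 0, \ldots, 0,\; \textstyle\sum_{I_0} I_0\bigr) \;=\; \bigl(0, 0, \ldots, 0,\; \text{soc}(R_R)\bigr),
\]
which is the claimed equality. Part (2) is obtained by interchanging the words "right" and "left" throughout and applying the left half of the preceding theorem.

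The only point that might require a brief check is that $(0, \ldots, 0, J)$ is genuinely a right (resp.\ left) $H_n(R)$-submodule whenever $J$ is a right (resp.\ left) ideal of $R$, and that the correspondence $J \mapsto (0, \ldots, 0, J)$ converts sums of $R$-ideals into sums of $H_n(R)$-ideals. Both statements are already implicit in the preceding theorem: the multiplication formula in $H_n(R)$ forces products of elements of the form $(0, \ldots, 0, a)$ with arbitrary elements of $H_n(R)$ to remain in the last coordinate, and additivity of the last-coordinate map is immediate from the component-wise addition. I therefore do not foresee any real obstacle; the corollary is essentially a one-line consequence of the theorem it follows.
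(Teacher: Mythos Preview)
Your proposal is correct and matches the paper's approach: the paper gives no proof for this corollary, treating it as an immediate consequence of the preceding theorem on minimal one-sided ideals, which is exactly how you derive it. (The extra outer ``$\text{soc}$'' appearing on the right-hand side of the paper's stated formula is harmless, since $(0,\ldots,0,\text{soc}(R_R))$ is already a sum of minimal one-sided ideals and hence equals its own socle.)
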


\section{Prime and Semi-prime Ideals of Hurwitz Quotient \\ of Polynomial Rings} \label{sec-prim}

In this section, we establish a one-to-one correspondence between the prime (resp., semi-prime) ideals of \( R \) and those of \( H_{n}(R) \). As a non-trivial application, we classify the lower nil-radical, the upper nil-radical, and the Levitzki radical of the ring \( H_{n}(R) \) and, consequently, of \( R_{h}[x] / \langle x^{n} \rangle \).

\medskip

Our first chief result in this direction is the following one.

\begin{theorem} \label{3.1}
An ideal \( P \) of the ring \( H = H_{n}(R) \) is a prime (resp., semi-prime, minimal prime, completely prime, completely semi-prime) ideal if, and only if, \( P = (P_{0}, R, \ldots, R) \) for some prime (resp., semi-prime, minimal prime, completely prime, completely semi-prime) ideal \( P_{0} \) of \( R \).
\end{theorem}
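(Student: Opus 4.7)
The plan rests on a single structural observation already exploited in the proof of Theorem~\ref{2.3}: the two-sided ideal $A := \{(a_1, \ldots, a_n) \in H \mid a_1 = 0\}$ is nilpotent, in fact $A^n = 0$. Since every prime, semi-prime, completely prime, completely semi-prime, and minimal prime ideal of $H$ must contain every nilpotent ideal of $H$, any ideal $P$ of $H$ in one of these five classes automatically satisfies $A \subseteq P$.

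Granted $A \subseteq P$, I would next show that $P$ is determined by its first-coordinate projection. Setting $P_0 := \{r \in R \mid (r, 0, \ldots, 0) \in P\}$, one checks directly that $P_0$ is an ideal of $R$. For the equality $P = \nabla(P_0) = (P_0, R, \ldots, R)$: if $(a_1, a_2, \ldots, a_n) \in P$, the element $(a_1, 0, \ldots, 0)$ differs from it by an element of $A \subseteq P$, hence lies in $P$, yielding $a_1 \in P_0$; conversely, if $a_1 \in P_0$, then adding any element of $A$ to $(a_1, 0, \ldots, 0)$ keeps us in $P$, so every tuple with first coordinate in $P_0$ belongs to $P$.

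The remainder is transport along an isomorphism. By Lemma~\ref{2.7}, $H / \nabla(P_0) \cong R / P_0$, so $\nabla(P_0)$ is prime (respectively, semi-prime, completely prime, completely semi-prime) in $H$ if and only if $P_0$ has the corresponding property in $R$, because each of these notions is a property of the quotient ring (primeness, absence of nonzero nilpotent ideals, being a domain, reducedness, respectively). This simultaneously settles both directions of the theorem in four of the five cases. For the minimal prime case, I would observe that the bijection $P_0 \leftrightarrow \nabla(P_0)$ between prime ideals of $R$ and prime ideals of $H$ preserves inclusions in both directions, since $P_0 \subseteq Q_0$ clearly holds if and only if $\nabla(P_0) \subseteq \nabla(Q_0)$; hence minimal elements correspond to minimal elements.

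The only genuinely technical step is the assertion $A^n = 0$; everything else reduces to passage through the isomorphism of Lemma~\ref{2.7}. This step can be verified either by direct induction using the multiplication rule of $H_n(R)$ (each successive power of an element of $A$ pushes the first nonzero coordinate one step to the right), or, more cleanly, by appealing to the isomorphism $H_n(R) \cong R_h[x]/\langle x^n \rangle$ noted in the introduction, under which $A$ corresponds to the ideal generated by $x$, whose $n$-th power is evidently zero.
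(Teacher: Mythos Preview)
Your proposal is correct and, in fact, cleaner than the paper's own argument. The paper proceeds more computationally: for the converse direction it first shows $(0,\ldots,0,1)H(0,\ldots,0,1)=0\subseteq P$ to force $(0,\ldots,0,1)\in P$, then iterates this trick coordinate by coordinate to obtain $(0,R,\ldots,R)\subseteq P$, and finally verifies directly that the first-coordinate projection $P_0$ is prime via the elementwise test $aRb\subseteq P_0\Rightarrow a\in P_0$ or $b\in P_0$; the forward direction is handled by a similar elementwise check. Only the prime case is written out, with the others declared ``rather similar.'' Your route bypasses all of this by invoking the nilpotency $A^n=0$ (already used in Theorem~\ref{2.3}) to get $A\subseteq P$ in one stroke for every ideal in any of the five classes, and then transports each property through the isomorphism $H/\nabla(P_0)\cong R/P_0$ of Lemma~\ref{2.7}, which genuinely handles all five cases uniformly rather than leaving four of them to the reader. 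The inclusion-preserving bijection you set up for the minimal-prime case is the right way to finish that variant. The trade-off is that the paper's argument is self-contained at the element level, while yours leans on Lemma~\ref{2.7} and on the reader accepting that primeness, semi-primeness, complete primeness, and complete semi-primeness are properties of the quotient ring --- but that is standard and your approach is both shorter and more conceptual.
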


\begin{proof}
We prove only the case for prime ideals as the proofs for the other cases are rather similar. To that goal, let \( P_{0} \) be a prime ideal of \( R \), and let \( P = (P_{0}, R, \ldots, R) \). We show that \( P \) is a prime ideal of \( H \). Indeed, let \( (a_{1}, a_{2}, \ldots, a_{n}) \) and \( (b_{1}, b_{2}, \ldots, b_{n}) \) be elements of \( H \) such that \[ (a_{1}, a_{2}, \ldots, a_{n})H(b_{1}, b_{2}, \ldots, b_{n}) \subseteq P .\] Since \( P_{0} \) is a prime ideal of \( R \) and \( a_{1}Rb_{1} \subseteq P_{0} \), we conclude that \( a_{1} \in P_{0} \) or \( b_{1} \in P_{0} \). Thus, \( (a_{1}, a_{2}, \ldots, a_{n}) \in P \) or \( (b_{1}, b_{2}, \ldots, b_{n}) \in P \) whence \( P \) is a prime ideal of \( H \), as asked for.

Conversely, assume that \( P \) is a prime ideal of \( H \). Since \[ (0, 0, \ldots, 1)H(0, 0, \ldots, 1) = 0 \in P ,\] we have \( (0, 0, \ldots, 1) \in P \). Now, \( (r, 0, \ldots, 0)(0, 0, \ldots, 1) \in P \) for each \( r \in R \), so that \( (0, 0, \ldots, R) \subseteq P \). Similarly, \[ (0, 0, \ldots, 1, 0)H(0, 0, \ldots, 1, 0) \subseteq (0, 0, \ldots, R) \subseteq P ,\] forcing \( (0, 0, \ldots, 1, 0) \in P \). Continuing this process, we obtain \( (0, R, \ldots, R, R) \in P \). Defining \( P_{0} = \{a \in R \mid (a, a_2, \ldots, a_n) \in P\} \), it must be that \( P = (P_{0}, R, \ldots, R) \). Besides, it is routinely checked that \( P_{0} \) is an ideal of \( R \). Letting \( aRb \subseteq P_{0} \) for \( a, b \in R \), we discover that \[ (a, 0, \ldots, 0)H(b, 0, \ldots, 0) \subseteq P ,\] and hence \( (a, 0, \ldots, 0) \in P \) or \( (b, 0, \ldots, 0) \in P \). Therefore, \( a \in P_{0} \) or \( b \in P_{0} \), showing that \( P_{0} \) is a prime ideal of \( R \), as pursued.
\end{proof}

A series of three consequences is given in the sequel.

\begin{corollary}
An ideal \( P \) of the ring \( R_{h}[x] / \langle x^{n} \rangle \) is a prime (resp., semi-prime, minimal prime, completely prime, completely semi-prime) ideal if, and only if, \[ P = P_{0} + Rx + \cdots + Rx^{n-1} + \langle x^{n} \rangle \] for some prime (resp., semi-prime, minimal prime, completely prime, completely semi-prime) ideal \( P_{0} \) of \( R \).
\end{corollary}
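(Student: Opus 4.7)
The plan is to deduce this corollary directly from Theorem~\ref{3.1} by transporting the classification of prime-type ideals across the ring isomorphism $\phi: R_{h}[x]/\langle x^{n}\rangle \to H_{n}(R)$ defined in the introduction by
\[
\phi\Bigl(\sum_{i=0}^{n-1} a_{i} x^{i}\Bigr) = (a_{0}, a_{1}, \ldots, a_{n-1}).
\]
The key observation is that, under this isomorphism, the subset $P_{0} + Rx + \cdots + Rx^{n-1} + \langle x^{n} \rangle$ of $R_{h}[x]/\langle x^{n}\rangle$ corresponds precisely to the set $(P_{0}, R, \ldots, R)$ of $H_{n}(R)$ in the notation used in Theorem~\ref{3.1}. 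Hence the bijection $I \mapsto \phi(I)$ restricts to a bijection between the ideals of $R_{h}[x]/\langle x^{n}\rangle$ of the stated form and the ideals of $H_{n}(R)$ of the form $(P_{0}, R, \ldots, R)$ with $P_{0}$ an ideal of $R$.

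First I would verify (briefly) that $\phi$ is indeed a ring isomorphism, so that the five properties listed (prime, semi-prime, minimal prime, completely prime, completely semi-prime) are preserved in both directions. Each of these properties is defined purely in terms of ideal multiplication and inclusions, so they are invariant under any ring isomorphism: $P$ is prime in $R_{h}[x]/\langle x^{n}\rangle$ if and only if $\phi(P)$ is prime in $H_{n}(R)$, and analogously for the other four variants. The case of minimal prime ideals requires the additional (but immediate) remark that $\phi$ preserves inclusion of ideals, so a minimal element in the poset of prime ideals of one ring corresponds to a minimal element in the poset of prime ideals of the other.

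With that in place, the proof becomes a direct translation. If $P$ is an ideal of $R_{h}[x]/\langle x^{n}\rangle$ having one of the five properties, then $\phi(P)$ is an ideal of $H_{n}(R)$ with the same property, and by Theorem~\ref{3.1} we have $\phi(P) = (P_{0}, R, \ldots, R)$ for some ideal $P_{0}$ of $R$ with the corresponding property; applying $\phi^{-1}$ gives $P = P_{0} + Rx + \cdots + Rx^{n-1} + \langle x^{n}\rangle$. Conversely, given such a $P_{0} \subseteq R$, the set $P_{0} + Rx + \cdots + Rx^{n-1} + \langle x^{n}\rangle$ maps under $\phi$ to $(P_{0}, R, \ldots, R)$, which Theorem~\ref{3.1} identifies as an ideal of $H_{n}(R)$ of the required type, and hence so is its preimage under $\phi^{-1}$.

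I do not anticipate a genuine obstacle; the only point where one should be mildly careful is the identification of $\phi\bigl(P_{0} + Rx + \cdots + Rx^{n-1} + \langle x^{n}\rangle\bigr)$ with $(P_{0}, R, \ldots, R)$, which follows immediately from the definition of $\phi$ by reading off coordinates. Therefore the corollary can be written in a single short paragraph citing Theorem~\ref{3.1} together with the isomorphism $R_{h}[x]/\langle x^{n}\rangle \cong H_{n}(R)$.
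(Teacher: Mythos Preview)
Your proposal is correct and follows exactly the route the paper intends: the corollary is stated without proof there, as it is an immediate translation of Theorem~\ref{3.1} through the isomorphism $\phi: R_{h}[x]/\langle x^{n}\rangle \to H_{n}(R)$ described in the introduction. Your identification of $\phi\bigl(P_{0}+Rx+\cdots+Rx^{n-1}+\langle x^{n}\rangle\bigr)$ with $(P_{0},R,\ldots,R)$ and the remark that all five ideal-theoretic properties transfer along ring isomorphisms is precisely the justification the paper leaves implicit.
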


\begin{corollary} \label{3.3}
Let \( R \) be a ring. Then, \( N_{\ast}(H_{n}(R)) = (N_{\ast}(R), R, \ldots, R) \).
\end{corollary}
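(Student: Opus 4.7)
The plan is to use the standard characterization of the lower nil-radical as the intersection of all prime ideals, and then to transport this description through the bijection between prime ideals of $R$ and prime ideals of $H_n(R)$ already supplied by Theorem \ref{3.1}.

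First I would write
\[
N_{\ast}(H_n(R)) = \bigcap \{P \mid P \text{ is a prime ideal of } H_n(R)\}.
\]
By Theorem \ref{3.1}, every prime ideal of $H_n(R)$ has the form $(P_0, R, \ldots, R)$ for a uniquely determined prime ideal $P_0$ of $R$, and conversely every such $(P_0, R, \ldots, R)$ is prime. Hence the indexing set of primes of $H_n(R)$ is in a natural bijection with the family $\{P_0^{(\alpha)}\}$ of all prime ideals of $R$.

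Next I would perform the component-wise intersection. Since the intersection of a family of sets of the form $(P_0^{(\alpha)}, R, \ldots, R)$ in $H_n(R)$ is computed coordinate by coordinate, we get
\[
\bigcap_{\alpha} (P_0^{(\alpha)}, R, \ldots, R) = \Bigl(\bigcap_{\alpha} P_0^{(\alpha)}, R, \ldots, R\Bigr) = (N_{\ast}(R), R, \ldots, R),
\]
using the definition of $N_{\ast}(R)$ as the intersection of all prime ideals of $R$. Combined with the first display, this yields the claimed equality.

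There is essentially no obstacle: the whole argument rests on Theorem \ref{3.1}, so the only point that needs a line of care is the justification that a set-theoretic intersection of ideals of the form $(P_0, R, \ldots, R)$ inside $H_n(R)$ is exactly $(\bigcap P_0, R, \ldots, R)$, which is immediate from the coordinate description of elements of $H_n(R)$. No additional lemmas are needed beyond the ones already established in the excerpt.
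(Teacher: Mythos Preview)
Your argument is correct and coincides with the paper's Method~1, which likewise invokes Theorem~\ref{3.1} and intersects the ideals $(P_i, R, \ldots, R)$ coordinate-wise to obtain $(N_{\ast}(R), R, \ldots, R)$. The paper also offers a second proof via Lemma~\ref{2.7} and the semi-prime quotient $R/N_{\ast}(R)$, but your approach matches its primary one.
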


\begin{proof}
\textbf{Method 1:} Let \( R \) be an arbitrary ring. Then \( N_{\ast}(R) = \cap_i P_i \), running over all prime ideals \( P_i \) of \( R \). So, one has that
\[
N_{\ast}(H_{n}(R)) = \cap_i (P_{i}, R, \ldots, R) = \left(\cap_i P_{i}, R, \ldots, R \right) = (N_{\ast}(R), R, \ldots, R).
\]

\textbf{Method 2:} Let \( N = (N_{\ast}(R), R, \ldots, R) \). It is now elementarily to verify that \( N \) is an ideal of \( H := H_{n}(R) \) such that \( H / N \cong R / N_{\ast}(R) \) is semi-prime. Thus, \( N_{\ast}(H) \subseteq N \) is true. To show equality, notice that, for any prime ideal \( P \) of \( H \), we have \( P = (P_{0}, R, \ldots, R) \), where \( P_{0} \) is a prime ideal of \( R \) thanks to Theorem \ref{3.1}. But, since \( N_{\ast}(R) \subseteq P_{0} \), it obviously follows that \( N \subseteq P \). Finally, \( N_{\ast}(H) = N \), as desired.
\end{proof}

\begin{corollary}
Let \( R \) be a ring. Then, \[ N_{\ast}(R_{h}[x] / \langle x^{n} \rangle) = N_{\ast}(R) + Rx + \cdots + Rx^{n-1} + \langle x^{n} \rangle .\]
\end{corollary}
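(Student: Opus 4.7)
The plan is to invoke the ring isomorphism $\phi: R_h[x]/\langle x^n\rangle \to H_n(R)$ recorded at the beginning of the paper, which sends $\sum_{i=0}^{n-1} a_i x^i$ to the tuple $(a_0, a_1, \ldots, a_{n-1})$, and then transport the conclusion of Corollary \ref{3.3} through it. Since the lower nil-radical is preserved under ring isomorphisms, one has $\phi(N_\ast(R_h[x]/\langle x^n\rangle)) = N_\ast(H_n(R))$.

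First I would unpack the right-hand side of the claimed equality: under $\phi$, the set $N_\ast(R) + Rx + \cdots + Rx^{n-1} + \langle x^n\rangle$ maps bijectively onto $(N_\ast(R), R, \ldots, R)$, because the coefficient of $x^0$ is required to lie in $N_\ast(R)$ while all higher coefficients are unrestricted in $R$. Next I would cite Corollary \ref{3.3}, which identifies $N_\ast(H_n(R))$ with precisely this tuple set $(N_\ast(R), R, \ldots, R)$. Combining these two observations and applying $\phi^{-1}$ gives the claimed equality in $R_h[x]/\langle x^n\rangle$.

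There is essentially no obstacle here; the statement is a direct translation of Corollary \ref{3.3} from the $H_n(R)$-model to the Hurwitz quotient model via the established isomorphism $\phi$, entirely analogous to how Corollary \ref{2.6} was deduced from Corollary \ref{2.5} and Corollary \ref{2.10}(2) from Corollary \ref{2.10}(1). Consequently, the written proof can be a single sentence citing Corollary \ref{3.3} and the isomorphism $R_h[x]/\langle x^n\rangle \cong H_n(R)$.
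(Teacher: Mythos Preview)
Your proposal is correct and matches the paper's approach exactly: the paper states this corollary without proof, treating it as an immediate translation of Corollary~\ref{3.3} through the isomorphism $R_h[x]/\langle x^n\rangle \cong H_n(R)$, just as you describe.
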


As it is known in the existing literature, a ring \( R \) is called \textit{2-primal} provided that \( N_{\ast}(R) = \text{Nil}(R) \). It is obvious that commutative rings and reduced rings (i.e., rings without non-zero nilpotent elements) are always 2-primal. In \cite{13}, Shin proved that a ring \( R \) is 2-primal if, and only if, every minimal prime ideal of \( R \) is completely prime. Besides, Birkenmeier-Heatherly-Lee showed in \cite{2} the interesting fact that the polynomial ring \( R[x] \) over a 2-primal ring \( R \) is also 2-primal. 

\medskip

We now present the following slightly more general assertion.

\begin{lemma}
The ring \( H_{n}(R) \) is \text{2-primal} if, and only if, \( R \) is a \text{2-primal} ring.
\end{lemma}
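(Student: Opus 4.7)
The plan is to reduce the question to computing the nilradical of $H_n(R)$ explicitly and then comparing with the lower nil-radical formula already obtained in Corollary \ref{3.3}. Concretely, by Corollary \ref{3.3}, we have $N_{\ast}(H_n(R)) = (N_{\ast}(R), R, \ldots, R)$, so if I can establish the parallel identity
\[
\mathrm{Nil}(H_n(R)) = (\mathrm{Nil}(R), R, \ldots, R),
\]
then the equivalence is immediate: $N_{\ast}(H_n(R)) = \mathrm{Nil}(H_n(R))$ holds if and only if $N_{\ast}(R) = \mathrm{Nil}(R)$ holds, component-wise in the first entry. This reduces the whole lemma to a single structural fact about nilpotent elements of $H_n(R)$.

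To prove the displayed identity for $\mathrm{Nil}(H_n(R))$, I would work with the subset
\[
A := \{(0, a_2, \ldots, a_n) \mid a_i \in R\} = \nabla(0),
\]
which is a two-sided ideal of $H_n(R)$ by the remarks preceding Lemma \ref{2.7}, and for which $H_n(R)/A \cong R$ by Lemma \ref{2.7}. The crucial observation is that $A^n = 0$. Inspecting the Hurwitz-type multiplication
\[
(AB)(m) = \sum_{i=0}^{m-1} \binom{m-1}{i} a_{i+1} b_{m-i},
\]
a product of two elements of $A$ forces both factors to have their first coordinate zero, so the resulting element lies in $(0,0,R,\ldots,R)$; iterating, a $k$-fold product lies in the set with the first $k$ coordinates zero, and after $n$ iterations it vanishes. (This is the usual ``strictly upper triangular is nilpotent'' phenomenon; the binomial twist only affects coefficients, not supports.)

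With this in hand, the equality $\mathrm{Nil}(H_n(R)) = (\mathrm{Nil}(R), R, \ldots, R)$ is straightforward. The forward containment follows by looking at the first coordinate, since $(a_1,\ldots,a_n)^m$ has first coordinate $a_1^m$. For the reverse containment, if $a_1^k = 0$, then $(a_1,\ldots,a_n)^k \in A$, and hence $(a_1,\ldots,a_n)^{kn} \in A^n = 0$.

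Combining both formulas yields
\[
H_n(R) \text{ is 2-primal} \;\Longleftrightarrow\; N_{\ast}(H_n(R)) = \mathrm{Nil}(H_n(R)) \;\Longleftrightarrow\; N_{\ast}(R) = \mathrm{Nil}(R) \;\Longleftrightarrow\; R \text{ is 2-primal}.
\]
The only step requiring any real verification is the nilpotency $A^n = 0$; everything else is a formal consequence of Corollary \ref{3.3} and Lemma \ref{2.7}. I expect that nilpotency check to be the main (and essentially only) obstacle, but the combinatorial pattern of the multiplication formula makes it routine.
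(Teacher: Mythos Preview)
Your proof is correct and follows essentially the same line as the paper's: both rest on Corollary~\ref{3.3} together with the observation that the first coordinate of a power in $H_n(R)$ is the power of the first coordinate. The only difference is organizational---you first establish the standalone identity $\mathrm{Nil}(H_n(R)) = (\mathrm{Nil}(R), R, \ldots, R)$ via $A^n = 0$ (a fact the paper also records in the proof of Theorem~\ref{2.3}) and then compare the two formulas, whereas the paper argues each implication element-by-element without isolating that identity.
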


\begin{proof}
Suppose first that \( H_{n}(R) \) is 2-primal, and choose \( a \in \text{Nil}(R) \). Then, it follows that \( (a, 0, \ldots, 0) \in Nil(H_{n}(R)) \). Since \( H_{n}(R) \) is 2-primal, it must be that \( (a, 0, \ldots, 0) \in N_{\ast}(H_{n}(R)) \). Invoking Corollary \ref{3.3}, we deduce \( a \in N_{\ast}(R) \), insuring that \( R \) is 2-primal.

Conversely, assume \( R \) is 2-primal, and choose \( (a_{1}, a_{2}, \ldots, a_{n}) \in \text{Nil}(H_{n}(R)) \). Thus, \( (a_{1}, a_{2}, \ldots, a_{n})^{n} = 0 \) for some positive integer \( n \). This assures \( a_{1}^{n} = 0 \), so \( a_{1} \in N_{\ast}(R) \) because \( R \) is 2-primal. Utilizing Corollary \ref{3.3}, one finds that \( (a_{1}, a_{2}, \ldots, a_{n}) \in N_{\ast}(H_{n}(R)) \). Hence, \( H_{n}(R) \) is 2-primal, as asserted.
\end{proof}

We now directly obtain the following.

\begin{corollary}
The ring \( R_{h}[x] / \langle x^{n} \rangle \) is \text{2}-primal if, and only if, \( R \) is a \text{2}-primal ring.
\end{corollary}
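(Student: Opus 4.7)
The plan is to reduce the statement to the previous lemma by invoking the ring isomorphism $R_{h}[x]/\langle x^{n}\rangle \cong H_{n}(R)$ that was recorded in the introduction via the map
\[
\phi\Bigl(\sum_{i=0}^{n-1} a_{i} x^{i}\Bigr) = (a_{0}, a_{1}, \ldots, a_{n-1}).
\]
Since being 2-primal is a purely ring-theoretic property (it says $N_{\ast}(S)=\mathrm{Nil}(S)$, and both the prime radical and the set of nilpotent elements are preserved under any ring isomorphism), the rings $R_{h}[x]/\langle x^{n}\rangle$ and $H_{n}(R)$ are 2-primal simultaneously.

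From here the argument is a one-liner: by the preceding lemma, $H_{n}(R)$ is 2-primal if and only if $R$ is 2-primal, so the same equivalence passes across $\phi$ to $R_{h}[x]/\langle x^{n}\rangle$. If one wished to make the translation explicit rather than invoke the abstract principle, one could instead recompute: under $\phi$, a coset $f+\langle x^n\rangle$ corresponds to a tuple whose first coordinate is $f(0)$, and Corollary \ref{3.3} (together with its stated companion for $R_h[x]/\langle x^n\rangle$) gives $N_{\ast}(R_{h}[x]/\langle x^{n}\rangle)=N_{\ast}(R)+Rx+\cdots+Rx^{n-1}+\langle x^{n}\rangle$, while a coset lies in $\mathrm{Nil}(R_{h}[x]/\langle x^{n}\rangle)$ exactly when its constant term lies in $\mathrm{Nil}(R)$ (the ``tail'' $Rx+\cdots+Rx^{n-1}+\langle x^n\rangle$ is always nilpotent because $x^n\in\langle x^n\rangle$). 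The equality $N_{\ast}=\mathrm{Nil}$ in the quotient ring is thus equivalent to the same equality in $R$.

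There is no real obstacle here; the corollary is a direct transportation of the previous lemma across a named isomorphism, parallel to how Corollary \ref{2.6} was deduced from Corollary \ref{2.5} and how the nil-radical corollary was deduced from Corollary \ref{3.3}. The only care needed is to note that $\mathrm{Nil}$ and $N_{\ast}$ are isomorphism invariants, which is immediate.
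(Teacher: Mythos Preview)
Your proposal is correct and matches the paper's intent: the paper states this corollary without proof, treating it as an immediate consequence of the preceding lemma via the isomorphism \(R_{h}[x]/\langle x^{n}\rangle \cong H_{n}(R)\) recorded in the introduction. Your observation that 2-primality is an isomorphism invariant is exactly the (implicit) bridge the paper uses, and your optional explicit recomputation is a faithful unpacking of that same argument.
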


It is well known in the existing research sources that a ring \( R \) is said to be a \textit{Jacobson ring} if every prime ideal of \( R \) is semiprimitive. In \cite{15} Watters established that the polynomial ring \( R[x] \) over a Jacobson ring \( R \) is also a Jacobson ring.

\medskip

We now have the following.

\begin{corollary}
The ring \( H_{n}(R) \) is Jacobson if, and only if, \( R \) is a Jacobson ring.
\end{corollary}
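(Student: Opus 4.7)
The plan is to reduce everything to the prime correspondence already established in Theorem \ref{3.1} together with the quotient isomorphism from Lemma \ref{2.7}. Recall that a prime ideal $P$ of a ring $S$ is semiprimitive precisely when $\text{rad}(S/P) = 0$, so $S$ is Jacobson if and only if $\text{rad}(S/P) = 0$ for every prime ideal $P$ of $S$. I will use this reformulation throughout.

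First I would handle the direction ``$R$ Jacobson $\Rightarrow$ $H_n(R)$ Jacobson''. Let $P$ be an arbitrary prime ideal of $H_n(R)$. By Theorem \ref{3.1}, $P = (P_0, R, \ldots, R) = \nabla(P_0)$ for some prime ideal $P_0$ of $R$. Applying Lemma \ref{2.7} with $I = P_0$ yields $H_n(R)/P \cong R/P_0$. Since $R$ is Jacobson and $P_0$ is prime, $\text{rad}(R/P_0) = 0$, hence $\text{rad}(H_n(R)/P) = 0$ via the isomorphism, so $P$ is semiprimitive.

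Conversely, assume $H_n(R)$ is Jacobson and take an arbitrary prime ideal $P_0$ of $R$. By Theorem \ref{3.1} the ideal $\nabla(P_0) = (P_0, R, \ldots, R)$ is a prime ideal of $H_n(R)$, so by hypothesis it is semiprimitive, i.e.\ $\text{rad}(H_n(R)/\nabla(P_0)) = 0$. Using Lemma \ref{2.7} again, $H_n(R)/\nabla(P_0) \cong R/P_0$, and hence $\text{rad}(R/P_0) = 0$. Therefore every prime ideal of $R$ is semiprimitive, which is exactly the statement that $R$ is Jacobson, and an immediate corollary (via the ring isomorphism $R_{h}[x]/\langle x^n\rangle \cong H_n(R)$) would give the analogous statement for the Hurwitz polynomial quotient.

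There is no real obstacle here: the bijection $P_0 \leftrightarrow \nabla(P_0)$ between $\text{Spec}(R)$ and $\text{Spec}(H_n(R))$ given by Theorem \ref{3.1}, combined with the identification $H_n(R)/\nabla(P_0) \cong R/P_0$ from Lemma \ref{2.7}, reduces the property ``every prime has semiprimitive quotient'' on one side to the same property on the other side. The only small point worth stating explicitly in the write-up is the equivalence ``$P$ semiprimitive $\iff$ $\text{rad}(S/P)=0$'', so that one can transfer the property across the quotient isomorphism without having to re-derive the correspondence of maximal ideals above $P$ (which would otherwise require invoking Proposition \ref{2.8} as well).
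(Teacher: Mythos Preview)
Your proof is correct and follows exactly the route the paper has in mind: the corollary is stated without proof precisely because it falls out of the prime-ideal correspondence of Theorem \ref{3.1} together with the quotient isomorphism of Lemma \ref{2.7}, which is what you invoke. There is nothing to add.
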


Recall also that a ring \( R \) is \textit{semi-primary} if \( \text{rad}(R) \) is nilpotent and \( R / \text{rad}(R) \) is semi-simple.

\begin{lemma}
The ring \( H_{n}(R) \) is semi-primary if, and only if, \( R \) is a semi-primary ring.
\end{lemma}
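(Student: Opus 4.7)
The plan is to reduce both directions to the two facts established earlier: that $\mathrm{rad}(H_{n}(R)) = (\mathrm{rad}(R), R, \ldots, R)$ (Corollary~2.5) and that the quotient $H_{n}(R)/\mathrm{rad}(H_{n}(R)) \cong R/\mathrm{rad}(R)$ (Lemma~2.11). By Lemma~2.11, the semi-simplicity halves of the definition of \emph{semi-primary} on the two sides are equivalent, so the entire content of the proof is showing that $\mathrm{rad}(R)$ is nilpotent if and only if $\mathrm{rad}(H_{n}(R))$ is nilpotent.

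For the forward direction, I would write $J := \mathrm{rad}(R)$, so $\mathrm{rad}(H_{n}(R)) = (J, R, \ldots, R)$ by Corollary~2.5. If this ideal is nilpotent of index $k$, I pick any elements $j_{1}, \ldots, j_{k} \in J$ and look at the product of $(j_{1}, 0, \ldots, 0), \ldots, (j_{k}, 0, \ldots, 0) \in \mathrm{rad}(H_{n}(R))$. The formula for multiplication in $H_{n}(R)$ shows that the first component of a product is just the product of the first components, so the first component of this $k$-fold product is $j_{1} j_{2} \cdots j_{k}$, which must be $0$. Hence $J^{k} = 0$.

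For the reverse direction, which I expect to be the main (though still modest) obstacle, suppose $J^{t} = 0$. Introduce the auxiliary ideal
\[
A = \{(0, a_{2}, \ldots, a_{n}) \mid a_{i} \in R\} \subseteq H_{n}(R),
\]
which is a two-sided ideal because the first component of any product with an element of $A$ on either side is zero. The key observation is that $A^{n} = 0$: in the $m$-th component of a product of $n$ elements of $A$, every contributing term has the form $a^{(1)}_{j_{1}} \cdots a^{(n)}_{j_{n}}$ with $j_{1}+\cdots+j_{n} = m+n-1$ and each $j_{k} \geq 2$ (since the first entries vanish), forcing $m \geq n+1$, which is impossible for $m \leq n$. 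Now set $I := \mathrm{rad}(H_{n}(R))$. The projection $H_{n}(R) \to R$ onto the first coordinate is a ring homomorphism with kernel $A$, inducing $H_{n}(R)/A \cong R$ and carrying $I/A$ onto $J$. Thus $(I/A)^{t} = 0$, which means $I^{t} \subseteq A$, and consequently
\[
I^{tn} = (I^{t})^{n} \subseteq A^{n} = 0,
\]
so $I$ is nilpotent, completing the argument.

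Combining both directions with Lemma~2.11 for the semi-simple quotient condition yields the equivalence. The only subtlety worth attention in the write-up is justifying the bound $A^{n} = 0$, which already appeared implicitly in the proof of Theorem~2.3 and can be invoked there directly.
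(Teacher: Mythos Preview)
Your proposal is correct and follows essentially the same route as the paper: reduce the semi-simple quotient condition to the isomorphism $H_{n}(R)/\mathrm{rad}(H_{n}(R))\cong R/\mathrm{rad}(R)$ (the paper cites Lemma~2.7, you cite its specialization Lemma~2.11), and deduce the equivalence of nilpotency of the two radicals from Corollary~2.5. The only difference is that the paper asserts the nilpotency equivalence as an immediate consequence of Corollary~2.5 without further comment, whereas you spell out both directions explicitly via the embedding $j\mapsto (j,0,\ldots,0)$ and the nilpotent ideal $A$ with $A^{n}=0$; your added detail is sound and indeed matches the use of $A^{n}=0$ already made in the proof of Theorem~2.3.
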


\begin{proof}
Consulting with Corollary \ref{2.5}, one knows that \( \text{rad}(R) \) is nilpotent if, and only if, \( \text{rad}(H_{n}(R)) \) is nilpotent. Additionally, Lemma \ref{2.7} enables us that \( R / \text{rad}(R) \) is a semi-simple ring if, and only if, \( H_{n}(R) / \text{rad}(H_{n}(R)) \) is a semi-simple ring. In conclusion, the result follows immediately from these two observations.
\end{proof}

\begin{lemma}
Let \( R \) be a ring. Then, we have the following three statements:\\
\text{(1)}\;\; \( N^{*}(H_{n}(R)) = (N^{*}(R), R, \ldots, R) \).\\
\text{(2)}\;\; \( N_{0}(H_{n}(R)) = (N_{0}(R), R, \ldots, R) \).\\
\text{(3)}\;\; \( L\text{-rad}(H_{n}(R)) = (L\text{-rad}(R), R, \ldots, R) \).
\end{lemma}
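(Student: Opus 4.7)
The plan is to treat all three parts with a common template that refines the Method~2 argument of Corollary~\ref{3.3}. Two ingredients I would isolate at the outset are: (a) the subset $A=\{(0,a_2,\ldots,a_n):a_i\in R\}$ is an ideal of $H:=H_n(R)$ with $A^n=0$; and (b) the map $\pi\colon H\to R$, $(a_1,\ldots,a_n)\mapsto a_1$, is a surjective ring homomorphism with $\ker\pi=A$, so $\nabla(I)=\pi^{-1}(I)$ for every ideal $I$ of $R$. Because $\pi$ is a ring epimorphism, it carries nil, nilpotent, and locally nilpotent ideals to ideals of the same type in $R$. Moreover, $\nabla$ preserves sums of ideals: $\nabla(\sum_\alpha I_\alpha)=\sum_\alpha\nabla(I_\alpha)$, which is a direct unpacking of definitions.

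For~(1), I would first check that $\nabla(N^{*}(R))$ is a nil ideal of $H$. For any $(a)\in\nabla(N^{*}(R))$ the first entry $a_1\in N^{*}(R)$ is nilpotent, so $a_1^k=0$ for some $k$; hence $(a)^k\in A$, and therefore $(a)^{kn}\in A^n=0$. This gives $\nabla(N^{*}(R))\subseteq N^{*}(H)$. For the reverse inclusion, $\pi$ sends nil ideals to nil ideals, so $\pi(N^{*}(H))\subseteq N^{*}(R)$, whence $N^{*}(H)\subseteq\pi^{-1}(N^{*}(R))=\nabla(N^{*}(R))$.

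For~(2), the key structural observation is that if $I$ is a nilpotent ideal of $R$ with $I^k=0$, then $\nabla(I)$ is a nilpotent ideal of $H$ of index at most $kn$: since $\pi(\nabla(I))=I$, one has $\pi(\nabla(I)^k)=I^k=0$, i.e., $\nabla(I)^k\subseteq A$, and then $\nabla(I)^{kn}\subseteq A^n=0$. Writing $N_0(R)=\sum_\alpha I_\alpha$ as the sum of nilpotent ideals of $R$ and applying the sum-preservation of $\nabla$, this yields $\nabla(N_0(R))=\sum_\alpha\nabla(I_\alpha)\subseteq N_0(H)$. Conversely, any nilpotent ideal $K$ of $H$ satisfies $\pi(K)^m=0$ for some $m$, so $\pi(K)\subseteq N_0(R)$ and hence $K\subseteq\pi^{-1}(\pi(K))=\nabla(\pi(K))\subseteq\nabla(N_0(R))$.

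For~(3), I would localize the argument of~(2). Given a finite subset $\{(a^{(1)}),\ldots,(a^{(s)})\}\subseteq\nabla(L\text{-rad}(R))$, the first entries $\{a_1^{(j)}\}$ generate a nilpotent subring of $L\text{-rad}(R)$, say of nilpotency index $k$; the subring $T$ of $H$ generated by the $(a^{(j)})$ then satisfies $\pi(T)^k=0$, so $T^k\subseteq A$ and $T^{kn}\subseteq A^n=0$. Thus $\nabla(L\text{-rad}(R))$ is locally nilpotent, hence contained in $L\text{-rad}(H)$. The reverse inclusion proceeds by lifting a finite subset of $\pi(K)$ through $\pi$ to a finite subset of a locally nilpotent ideal $K$ of $H$, whose generated subring is nilpotent, and then pushing this nilpotency down via $\pi$ to conclude $\pi(K)\subseteq L\text{-rad}(R)$ and $K\subseteq\nabla(L\text{-rad}(R))$. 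I expect the only delicate point to be the structural step in~(2), namely the implication $I^k=0\Rightarrow\nabla(I)^k\subseteq A$; once that short-exact-sequence observation is in place, parts~(1) and~(3) are direct variants of the same template, with ``nilpotent'' replaced by ``nil'' or ``locally nilpotent'' respectively.
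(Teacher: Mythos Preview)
Your argument is correct and takes a more structural route than the paper. The paper treats only~(1) explicitly and argues elementwise: it first notes that $A=(0,R,\ldots,R)$ is nil, hence $A\subseteq N^{*}(H)$, so that $N^{*}(H)=(N,R,\ldots,R)$ for some ideal $N$ of $R$; it then identifies $N$ with $N^{*}(R)$ by checking that each $b\in N$ generates a nil ideal of $R$ (since $(pbq,\ast,\ldots,\ast)^{k}=0$ in $H$ forces $(pbq)^{k}=0$ for all $p,q\in R$), and conversely by embedding $N^{*}(R)$ diagonally into $H$. Parts~(2) and~(3) are then declared ``similar'' without details. Your approach instead exploits the surjection $\pi\colon H\to R$ with nilpotent kernel $A$ systematically: the single computation $\pi(\nabla(I)^{k})=I^{k}$, combined with $A^{n}=0$, handles the forward inclusion for all three radicals in one stroke, and the reverse inclusions reduce to the trivial fact that $\pi$ preserves nil, nilpotent, and locally nilpotent ideals. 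This buys genuine uniformity---(2) and~(3) really are parallel to~(1) rather than merely asserted so---at the cost of one small bookkeeping step, the sum-preservation $\nabla\bigl(\sum_\alpha I_\alpha\bigr)=\sum_\alpha\nabla(I_\alpha)$ used in~(2), which holds precisely because $A=\ker\pi\subseteq\nabla(I_\alpha)$ for every~$\alpha$.
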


\begin{proof}
We need to prove only that \( N^{*}(H_{n}(R)) = (N^{*}(R), R, \ldots, R) \), as the evidences for the other cases are similar. To that purpose, given \( A := \{(a_{1}, a_{2}, \ldots, a_{n}) \mid a_{1} = 0\} \). Since it is simply checked that \( A \) is a nil-ideal of \( H_{n}(R) \), we can get \( A \subseteq N^{*}(H_{n}(R)) \). Setting \[ N := \{a \in R \mid (a, a_{2}, \ldots, a_{n}) \in N^{*}(H_{n}(R))\} ,\] it is apparent that \( N \) is an ideal of \( R \), and so \( N^{*}(H_{n}(R)) = (N, R, \ldots, R) \). We now claim that \( N = N^{*}(R) \). In fact, choose \( b \in N \). Then, for each \( p, q \in R \), we have \( pbq \in N \), whence \( (pbq, *, \ldots, *) \in N^{*}(H_{n}(R)) \). Therefore, \( (pbq, *, \ldots, *)^{k} = 0 \) for some positive integer \( k \) guaranteeing that \( (pbq)^{k} = 0 \). Thus, \( b \in N^{*}(R) \), as claimed. 

Conversely, put \[ J := \{(a, 0, \ldots, 0) \mid a \in N^{*}(R)\} .\] Since it is plainly inspected that \( J \) is a nil-ideal of \( H_{n}(R) \), we may get \( J \subseteq N^{*}(H_{n}(R)) \). So, \( (bI_{n}) \in J \subseteq N^{*}(H_{n}(R)) \) for \( b \in N^{*}(R) \), as required.
\end{proof}

Our next consequence is the following one.

\begin{corollary}
Let \( R \) be a ring. Then, we have the following three statements:\\
\text{(1)}\;\; \( N^{*}(R_{h}[x] / \langle x^{n} \rangle) = N^{*}(R) + Rx + \cdots + Rx^{n-1} + \langle x^{n} \rangle \).\\
\text{(2)}\;\; \( N_{0}(R_{h}[x] / \langle x^{n} \rangle) = N_{0}(R) + Rx + \cdots + Rx^{n-1} + \langle x^{n} \rangle \).\\
\text{(3)}\;\; \( L\text{-rad}(R_{h}[x] / \langle x^{n} \rangle) = L\text{-rad}(R) + Rx + \cdots + Rx^{n-1} + \langle x^{n} \rangle \).
\end{corollary}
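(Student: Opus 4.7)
The plan is to reduce all three equalities to the previous lemma via the explicit ring isomorphism $\phi: R_{h}[x]/\langle x^{n}\rangle \to H_{n}(R)$ given in the introduction by
\[
\phi\!\left(\sum_{i=0}^{n-1} a_{i} x^{i}\right) = (a_{0}, a_{1}, \ldots, a_{n-1}).
\]
Since each of the upper nil-radical $N^{*}$, the Wedderburn radical $N_{0}$, and the Levitzki radical $L\text{-rad}$ is defined purely in terms of ideals satisfying properties (nil, nilpotent, locally nilpotent) that are preserved under ring isomorphisms, we have $\phi(N^{*}(R_{h}[x]/\langle x^{n}\rangle)) = N^{*}(H_{n}(R))$, and similarly for $N_{0}$ and $L\text{-rad}$.

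Next I would observe that under $\phi$ the subset $N^{*}(R) + Rx + \cdots + Rx^{n-1} + \langle x^{n}\rangle$ of $R_{h}[x]/\langle x^{n}\rangle$ is sent precisely to the subset $(N^{*}(R), R, \ldots, R)$ of $H_{n}(R)$, and analogously for $N_{0}(R)$ and $L\text{-rad}(R)$ in place of $N^{*}(R)$. Combining this with the preceding lemma, which asserts
\[
N^{*}(H_{n}(R)) = (N^{*}(R), R, \ldots, R), \quad N_{0}(H_{n}(R)) = (N_{0}(R), R, \ldots, R),
\]
\[
L\text{-rad}(H_{n}(R)) = (L\text{-rad}(R), R, \ldots, R),
\]
the three equalities of the corollary follow immediately by pulling back along $\phi^{-1}$.

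There is essentially no obstacle here: the whole argument is a transport-of-structure statement, and the only thing to check is that the polynomial-style description of the right-hand side in the corollary genuinely corresponds, coordinate by coordinate, to the tuple description on the $H_{n}(R)$-side. Since $\phi$ is $R$-linear and sends $x^{i}$ to the tuple with $1$ in the $(i+1)$-st coordinate and zeros elsewhere, this matching is immediate, and the corollary follows as a direct consequence of the preceding lemma.
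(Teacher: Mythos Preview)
Your proposal is correct and matches the paper's approach: the corollary is stated in the paper without any explicit proof, because it follows immediately from the preceding lemma via the isomorphism $\phi: R_{h}[x]/\langle x^{n}\rangle \to H_{n}(R)$ introduced earlier. Your transport-of-structure argument is exactly the intended justification.
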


\section{Further Results} \label{sec-res}

Remember the classical notion that a ring \( R \) satisfies the \textit{left stable range one condition} if, for any \( a, b \in R \) with \( Ra + Rb = R \), there exists \( c \in R \) such that \( a + cb \in U(R) \). The stable range one condition is particularly interesting due to the well-known Evans' theorem from \cite{6}, which states that a module \( M \) cancels from direct sums whenever \( \text{End}_{R}(M) \) has left stable range one.

\medskip

We are now ready to establish the following subsequent facts, which give some satisfactory criteria.

\begin{lemma} \label{lemma4.1}
Let \( R \) be a ring. Then, \( H = H_{n}(R) \) has the left stable range one if, and only if, \( R \) has the left stable range one.
\end{lemma}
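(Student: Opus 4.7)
The plan is to exploit Lemma~\ref{2.1}, which says that an element $(c_1, c_2, \ldots, c_n) \in H := H_n(R)$ lies in $U(H)$ if and only if $c_1 \in U(R)$. This makes the first--coordinate projection the natural bridge between the stable range one conditions in $R$ and in $H$, and the argument is essentially symmetric in the two directions.

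For the implication ``$H$ has left stable range one $\Rightarrow$ $R$ has left stable range one'', I would start with $a, b \in R$ satisfying $Ra + Rb = R$ and lift them to $A := (a, 0, \ldots, 0)$ and $B := (b, 0, \ldots, 0)$ in $H$. A short computation using the Hurwitz multiplication formula shows that $HA = \{(r_1 a, r_2 a, \ldots, r_n a) : r_i \in R\}$, and likewise for $HB$, so $Ra + Rb = R$ lifts componentwise to $HA + HB = H$. Applying the hypothesis in $H$ yields $C = (c_1, c_2, \ldots, c_n)$ with $A + CB \in U(H)$; reading off the first coordinate gives $a + c_1 b$, and Lemma~\ref{2.1} forces $a + c_1 b \in U(R)$. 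Hence $c_1$ witnesses left stable range one for the pair $(a,b)$ in $R$.

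For the converse, I would take arbitrary $A = (a_1, \ldots, a_n)$ and $B = (b_1, \ldots, b_n)$ in $H$ with $HA + HB = H$, write the identity $I_n = XA + YB$ for suitable $X, Y \in H$, and extract the first coordinate via the Hurwitz formula to obtain $x_1 a_1 + y_1 b_1 = 1$, so that $Ra_1 + R b_1 = R$. The stable range one condition in $R$ then produces $c \in R$ with $a_1 + c b_1 \in U(R)$. Setting $C := (c, 0, \ldots, 0)$, the multiplication formula gives $CB = (cb_1, cb_2, \ldots, cb_n)$, so $A + CB$ has first coordinate $a_1 + c b_1 \in U(R)$; Lemma~\ref{2.1} then upgrades this to $A + CB \in U(H)$, completing the argument.

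There is no real obstacle here: the only thing requiring minor care is checking that products of the shape $(r_1, \ldots, r_n)(a, 0, \ldots, 0)$ and $(c, 0, \ldots, 0)(b_1, \ldots, b_n)$ collapse to componentwise right-- and left--multiplication by $a$ and $c$, respectively. This is immediate from the Hurwitz convolution, since in each convolution only a single summand survives when one factor is concentrated in the first coordinate. The whole argument is a clean application of Lemma~\ref{2.1} once this reduction is noted.
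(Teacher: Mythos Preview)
Your proof is correct and follows essentially the same route as the paper's: both directions hinge on Lemma~\ref{2.1}, the element witnessing stable range one in $H$ is taken to be $(c,0,\ldots,0)$ (which the paper writes as $cI_n(R)$), and the passage between $HA+HB=H$ and $Ra_1+Rb_1=R$ is via the first-coordinate projection. The only difference is that you spell out the computations of $HA$, $HB$, and $CB$ more explicitly than the paper does.
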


\begin{proof}
Given \( A = (a, a_{2}, \ldots, a_{n}) \) and \( B = (b, b_{2}, \ldots, b_{n}) \in H \) such that \( HA + HB = H \). Thus, \( Ra + Rb = R \), and since \( R \) has left stable range one, there exists \( c \in R \) such that \( a + cb \in U(R) \). So, \( A + (cI_{n}(R))B \in U(H) \) in virtue of Lemma \ref{2.1}.

Conversely, assume \( a, b \in R \) such that \( Ra + Rb = R \). Then, it is obvious that \( H(a, 0, \ldots, 0) + H(b, 0, \ldots, 0) = H \), and since \( H \) has left stable range one, there exists \( C = (c, c_{2}, \ldots, c_{n}) \in H \) such that \( (a, 0, \ldots, 0) + C(b, 0, \ldots, 0) \in U(H) \). Consequently, \( a + cb \in U(R) \) bearing in mind Lemma \ref{2.1}, as requested.
\end{proof}

We now may extract the following consequence.

\begin{corollary}
Let \( R \) be a ring. Then, \( R_{h}[x] / \langle x^{n} \rangle \) has the left stable range one if, and only if, \( R \) has the left stable range one.
\end{corollary}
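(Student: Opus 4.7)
The plan is to reduce this statement immediately to Lemma \ref{lemma4.1} by transporting everything along the ring isomorphism
\[
\phi: R_{h}[x]/\langle x^{n}\rangle \xrightarrow{\;\cong\;} H_{n}(R)
\]
exhibited in the introduction via $\phi(\sum_{i=0}^{n-1} a_i x^i) = (a_0,a_1,\ldots,a_{n-1})$. The left stable range one condition is purely ring-theoretic (it is phrased in terms of left ideals, equations, and units), so it is invariant under ring isomorphism. Hence it suffices to observe that $R_{h}[x]/\langle x^{n}\rangle$ has left stable range one if and only if $H_{n}(R)$ does, and then invoke Lemma \ref{lemma4.1} for the equivalence with the condition on $R$.

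Concretely, I would first record the one-sentence remark that if $\psi: S \to T$ is a ring isomorphism and $Sa + Sb = S$, then $T\psi(a) + T\psi(b) = T$, and $\psi$ sends units to units, so left stable range one transfers in both directions between $S$ and $T$. Applying this to $\phi$ gives the first equivalence. Combining with Lemma \ref{lemma4.1}, which asserts the equivalence between the left stable range one condition on $H_{n}(R)$ and on $R$, completes the chain and yields the corollary.

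I do not expect any real obstacle here; the statement is a formal consequence of Lemma \ref{lemma4.1} and the isomorphism $R_{h}[x]/\langle x^{n}\rangle \cong H_{n}(R)$. If one wished to avoid invoking isomorphism invariance explicitly, the alternative would be to mimic the proof of Lemma \ref{lemma4.1} inside $R_{h}[x]/\langle x^{n}\rangle$, translating $(a_1,\ldots,a_n) \leftrightarrow a_1 + a_2 x + \cdots + a_n x^{n-1} + \langle x^n \rangle$ and using Corollary \ref{2.2} in place of Lemma \ref{2.1}; but this is strictly longer and produces no new information. So the cleanest write-up is a one-line proof citing Lemma \ref{lemma4.1} and the isomorphism.
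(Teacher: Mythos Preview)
Your proposal is correct and matches the paper's approach exactly: the paper states this corollary with no separate proof, treating it as an immediate consequence of Lemma~\ref{lemma4.1} together with the isomorphism $R_{h}[x]/\langle x^{n}\rangle \cong H_{n}(R)$ from the introduction. Your write-up is, if anything, more explicit than what the paper provides.
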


Usually, a ring \( R \) is said to be \textit{Dedekind finite}, provided that \( ab = 1 \) implies \( ba = 1 \) for any \( a, b \in R \).

\begin{lemma}
Let \( R \) be a ring. Then, \( H := H_{n}(R) \) is Dedekind finite if, and only if, \( R \) is Dedekind finite.
\end{lemma}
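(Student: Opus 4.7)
The plan is to reduce both directions to Lemma \ref{2.1} together with the obvious fact that, for any two elements \( A = (a_{1}, \ldots, a_{n}) \) and \( B = (b_{1}, \ldots, b_{n}) \) of \( H \), the first component of the product \( AB \) is simply \( a_{1}b_{1} \) (this being the only term in the convolution formula that does not involve a coordinate with index \( \geq 2 \) on at least one side). Under the identification of \( H \), the identity \( I_{n} \) corresponds to \( (1, 0, \ldots, 0) \).

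For the forward direction, I would assume \( R \) is Dedekind finite and take \( A, B \in H \) with \( AB = I_{n} \). Reading the first component of this equation gives \( a_{1} b_{1} = 1 \) in \( R \), so Dedekind finiteness of \( R \) yields \( b_{1} a_{1} = 1 \). In particular, \( a_{1} \in U(R) \), and therefore Lemma \ref{2.1} guarantees \( A \in U(H) \); let \( A' \) denote its two-sided inverse. From \( AB = I_{n} = AA' \) I would left-multiply by \( A' \) to conclude \( B = A' \), whence \( BA = A'A = I_{n} \), proving that \( H \) is Dedekind finite.

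For the converse, suppose \( H \) is Dedekind finite and let \( a, b \in R \) satisfy \( ab = 1 \). Consider the lifts \( A = (a, 0, \ldots, 0) \) and \( B = (b, 0, \ldots, 0) \) in \( H \). A direct application of the multiplication rule shows that every component of \( AB \) beyond the first vanishes (each such component is a sum whose only potentially nonzero term is \( a_{1} b_{m} = a \cdot 0 = 0 \) for \( m \geq 2 \)), so \( AB = (ab, 0, \ldots, 0) = I_{n} \). Dedekind finiteness of \( H \) then forces \( BA = I_{n} \), and reading off the first component gives \( ba = 1 \), as required.

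There is no real obstacle here; the proof is essentially bookkeeping, and all the work has already been done in Lemma \ref{2.1}. The only subtle point to mention explicitly is that the assignment \( a \mapsto (a, 0, \ldots, 0) \) preserves multiplication (so that \( AB \) above really is \( (ab, 0, \ldots, 0) \)) and that the assignment \( (a_{1}, \ldots, a_{n}) \mapsto a_{1} \) is the projection onto the first coordinate, both of which are immediate from the convolution product defining \( H \).
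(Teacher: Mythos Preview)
Your proof is correct, and for the forward direction it is genuinely different from the paper's. The paper writes out the system of equations arising from \(AB = I_n\) coordinate by coordinate and, after deducing \(b_1a_1 = 1\), performs an inductive manipulation (multiplying each equation on the left by \(b_1\) and on the right by \(a_1\), substituting earlier results) to verify directly that every coordinate of \(BA\) matches that of \(I_n\). You bypass this computation entirely: once \(a_1 \in U(R)\), Lemma~\ref{2.1} gives you a two-sided inverse \(A'\) for \(A\) in one stroke, and the standard cancellation \(AB = AA' \Rightarrow B = A'\) finishes the argument. Your route is shorter and more conceptual; the paper's has the minor virtue of being self-contained (it does not invoke Lemma~\ref{2.1}), but that lemma is already available and there is no reason not to use it.

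For the converse, the paper simply observes that subrings of Dedekind finite rings are Dedekind finite, implicitly using the embedding \(r \mapsto (r,0,\ldots,0)\). Your argument makes this embedding explicit and checks the relevant products by hand, so the two are essentially the same.
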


\begin{proof}
Choose \( A = (a_{1}, a_{2}, \ldots, a_{n}) \) and \( B = (b_{1}, b_{2}, \ldots, b_{n}) \in H \) such that \( AB = I_{n} \). This gives the following system of equations:
\begin{eqnarray}
a_{1}b_{1} &=& 1, \label{4.1} \\
a_{1}b_{2} + a_{2}b_{1} &=& 0, \label{4.2} \\
a_{1}b_{3} + 2a_{2}b_{2} + a_{3}b_{1} &=& 0, \label{4.3} \\
&\cdots& \nonumber
\end{eqnarray}

Since \( R \) is Dedekind finite, from (\ref{4.1}), we have \( b_{1}a_{1} = 1 \). Multiplying (\ref{4.2}) on the left side by \( b_{1} \) and on the right side by \( a_{1} \), we obtain \( b_{2}a_{1} + b_{1}a_{2} = 0 \), which guarantees that \( b_{2} = -b_{1}a_{2}b_{1} \) (denoted as \textbf{Eq 1}). 

Similarly, multiplying (\ref{4.3}) on the left side by \( b_{1} \), we get:
\[
b_{3} + 2b_{1}a_{2}b_{2} + b_{1}a_{3}b_{1} = 0.
\]
Substituting \textbf{Eq 1} into this equation leads to the equality
\[
b_{3} - 2b_{1}a_{2}b_{1}a_{2}b_{1} + b_{1}a_{3}b_{1} = b_{3} + 2b_{2}a_{2}b_{1} + b_{1}a_{3}b_{1} = 0.
\]
Furthermore, multiplying from the right side by \( a_{1} \), we arrive at \( b_{3}a_{1} + 2b_{2}a_{2} + b_{1}a_{3} = 0 \). Continuing this process, we can conclude that \( BA = I_{n} \), as needed. 

Conversely, if \( H \) is Dedekind finite, then \( R \) is Dedekind finite knowing that subrings of Dedekind finite rings are Dedekind finite as well.
\end{proof}

The next necessary and sufficient condition occurs. 

\begin{corollary}
Let \( R \) be a ring. Then, \( R_{h}[x] / \langle x^{n} \rangle \) is Dedekind finite if, and only if, \( R \) is Dedekind finite.
\end{corollary}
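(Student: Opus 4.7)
The plan is to deduce this corollary immediately from the preceding lemma together with the ring isomorphism $R_h[x]/\langle x^n\rangle \cong H_n(R)$ that was exhibited in the introduction via the map $\phi(\sum_{i=0}^{n-1} a_i x^i) = (a_0, a_1, \ldots, a_{n-1})$. Since Dedekind finiteness is a ring-theoretic property preserved under isomorphism (it depends only on the multiplicative monoid structure), the two rings $R_h[x]/\langle x^n\rangle$ and $H_n(R)$ are simultaneously Dedekind finite or not.

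More precisely, the first step is to note that by the isomorphism $R_h[x]/\langle x^n\rangle \cong H_n(R)$, the ring $R_h[x]/\langle x^n\rangle$ is Dedekind finite if and only if $H_n(R)$ is Dedekind finite. The second step is to invoke the preceding lemma, which asserts that $H_n(R)$ is Dedekind finite if and only if $R$ is Dedekind finite. Chaining these two equivalences yields the corollary.

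There is no genuine obstacle here, since the entire content has already been packaged in the lemma. Thus the proof reduces to a single line citing the lemma and the isomorphism, exactly in the style the authors have used for all the preceding corollaries (for instance, Corollary \ref{2.6} following Corollary \ref{2.5}, or the stable-range-one corollary following Lemma \ref{lemma4.1}). I would simply write: \emph{The claim follows at once by combining the previous lemma with the isomorphism $R_h[x]/\langle x^n\rangle \cong H_n(R)$.}
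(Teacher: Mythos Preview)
Your proposal is correct and matches the paper's approach exactly: the paper states this corollary without proof immediately after the lemma on $H_n(R)$, relying implicitly on the isomorphism $R_h[x]/\langle x^n\rangle \cong H_n(R)$ from the introduction, just as you describe.
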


Mimicking Ouyang and Chen (see \cite{12}), a ring \( R \) is called \textit{weakly symmetric} if, for all \( a, b, c \in R \), \( abc \in \text{Nil}(R) \) forces \( acb \in \text{Nil}(R) \). Recall also the concept from \cite{3} that a ring \( R \) is \textit{\( J \)-symmetric} if \( abc = 0 \) yields \( bac \in \text{rad}(R) \).

\medskip

We, thereby, come to the following statement.

\begin{lemma}
Let \( R \) be a ring and set \( H := H_{n}(R) \). Then:\\
\text{(1)}\;\; \( H \) is a weakly symmetric ring if, and only if, \( R \) is a weakly symmetric ring.\\
\text{(2)}\;\; \( H \) is a \( J \)-symmetric ring if, and only if, \( R \) is a \( J \)-symmetric ring.
\end{lemma}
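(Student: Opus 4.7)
The plan is to reduce both equivalences to a first-coordinate comparison, using the ring embedding $\iota \colon R \hookrightarrow H$ defined by $\iota(r) = (r, 0, \ldots, 0)$ together with the known descriptions of $\text{Nil}(H)$ and $\text{rad}(H)$. A direct application of the multiplication rule in $H_n(R)$ shows that $(r, 0, \ldots, 0)(s, 0, \ldots, 0) = (rs, 0, \ldots, 0)$, so $\iota$ is a ring homomorphism and $(r, 0, \ldots, 0)^{k} = (r^{k}, 0, \ldots, 0)$ for every $k \geq 1$. Moreover, for any $A = (a_{i})$, $B = (b_{i})$, $C = (c_{i})$ in $H$, the first coordinate of the product $ABC$ is simply $a_{1} b_{1} c_{1}$.

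The key preliminary fact I would record separately is that $X = (x_{1}, x_{2}, \ldots, x_{n}) \in \text{Nil}(H)$ if and only if $x_{1} \in \text{Nil}(R)$. Indeed, if $X^{k} = 0$, then reading the first coordinate gives $x_{1}^{k} = 0$; conversely, if $x_{1}^{m} = 0$, then $X^{m}$ lies in the ideal $A = \{(a_{i}) \in H \mid a_{1} = 0\}$, and since $A^{n} = 0$ (as already used in the proof of Theorem \ref{2.3}), we obtain $X^{mn} = 0$. Combined with Corollary \ref{2.5}, which gives $\text{rad}(H) = (\text{rad}(R), R, \ldots, R)$, both equivalences then fall out symmetrically.

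For (1), the forward direction is: given $abc \in \text{Nil}(R)$, the element $\iota(a)\iota(b)\iota(c) = (abc, 0, \ldots, 0)$ is nilpotent in $H$, so weak symmetry of $H$ forces $(acb, 0, \ldots, 0) = \iota(a)\iota(c)\iota(b) \in \text{Nil}(H)$, whence $acb \in \text{Nil}(R)$. Conversely, assuming $R$ is weakly symmetric, if $ABC \in \text{Nil}(H)$, then by the preliminary $a_{1} b_{1} c_{1} \in \text{Nil}(R)$, hence $a_{1} c_{1} b_{1} \in \text{Nil}(R)$, and the preliminary again yields $ACB \in \text{Nil}(H)$. Part (2) is handled in exactly the same way: if $abc = 0$ in $R$, then $\iota(a)\iota(b)\iota(c) = 0$, so $J$-symmetry of $H$ combined with Corollary \ref{2.5} gives $(bac, 0, \ldots, 0) \in \text{rad}(H) = (\text{rad}(R), R, \ldots, R)$, and hence $bac \in \text{rad}(R)$; conversely, if $ABC = 0$ in $H$, then $a_{1} b_{1} c_{1} = 0$, so $b_{1} a_{1} c_{1} \in \text{rad}(R)$, and invoking Corollary \ref{2.5} once more places $BAC$ in $\text{rad}(H)$.

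The main obstacle is not deep; it amounts to setting up the first-coordinate characterization of $\text{Nil}(H)$ cleanly via the nilpotent ideal $A$ with $A^{n}=0$, and then being careful to phrase the weak-symmetry and $J$-symmetry conditions so that the embedding $\iota$ and the projection onto the first coordinate interact correctly. Once these bookkeeping points are isolated, parts (1) and (2) are structurally identical, differing only in replacing $\text{Nil}$ with $\text{rad}$.
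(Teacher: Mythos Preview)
Your proof is correct and follows essentially the same approach as the paper: reduce to the first coordinate, use the characterization of $\mathrm{Nil}(H)$ (respectively $\mathrm{rad}(H)$) in terms of the first entry, and pass back and forth via the embedding $r \mapsto (r,0,\ldots,0)$. The only cosmetic differences are that the paper handles the direction $H \Rightarrow R$ by quoting ``subrings of weakly symmetric rings are weakly symmetric'' rather than spelling out the embedding argument, and that the paper leaves both the $J$-symmetric case and the explicit $\mathrm{Nil}(H)$ characterization (your ``preliminary fact'') to the reader, whereas you make them explicit.
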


\begin{proof}
We only prove the weak symmetric case, as the proof for the \( J \)-symmetric case is quite similar. To that aim, given \( R \) is a weakly symmetric ring, and let \( A = (a_1, a_2, \ldots, a_n) \), \( B = (b_1, b_2, \ldots, b_n) \), \( C = (c_1, c_2, \ldots, c_n) \in H \) such that \( ABC \in \text{Nil}(H) \). Thus, \( a_1b_1c_1 \in \text{Nil}(R) \), and since \( R \) is weakly symmetric, we derive that \( a_1c_1b_1 \in \text{Nil}(R) \). Therefore, \( ACB \in \text{Nil}(H) \), as expected. 

Conversely, if \( H \) is weakly symmetric, then \( R \) is weakly symmetric, as subrings of weakly symmetric rings are weakly symmetric too, as promised.
\end{proof}

Our automatic consequence is this one.

\begin{corollary}
Let \( R \) be a ring. Then:\\
\text{(1)}\;\; \( R_{h}[x] / \langle x^{n} \rangle \) is a weakly symmetric ring if, and only if, \( R \) is a weakly symmetric ring.\\
\text{(2)}\;\; \( R_{h}[x] / \langle x^{n} \rangle \) is a \( J \)-symmetric ring if, and only if, \( R \) is a \( J \)-symmetric ring.
\end{corollary}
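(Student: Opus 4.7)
The plan is to simply transport the preceding lemma across the ring isomorphism $R_h[x]/\langle x^n\rangle \cong H_n(R)$ established at the beginning of the paper via the map $\phi(\sum_{i=0}^{n-1} a_i x^i) = (a_0, a_1, \ldots, a_{n-1})$. Both notions in question are defined purely in terms of the ring operations, the set $\mathrm{Nil}(\cdot)$, and the Jacobson radical $\mathrm{rad}(\cdot)$, all of which are categorical invariants under ring isomorphism. Hence being weakly symmetric and being $J$-symmetric are each preserved and reflected by any ring isomorphism.

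Concretely, I would argue the following. For part~(1), a ring $S$ is weakly symmetric iff for all $a,b,c \in S$, $abc \in \mathrm{Nil}(S) \Rightarrow acb \in \mathrm{Nil}(S)$. Applying $\phi$ and its inverse, one sees that $R_h[x]/\langle x^n\rangle$ satisfies this condition iff $H_n(R)$ does. Then invoke the preceding lemma to equate this with $R$ being weakly symmetric. For part~(2), a ring $S$ is $J$-symmetric iff for all $a,b,c \in S$, $abc = 0 \Rightarrow bac \in \mathrm{rad}(S)$. Again, since $\phi$ is a ring isomorphism and carries $\mathrm{rad}(R_h[x]/\langle x^n\rangle)$ onto $\mathrm{rad}(H_n(R))$ (this is automatic under isomorphism, but is also recorded explicitly in Corollary~\ref{2.6}), the $J$-symmetric condition transfers between $R_h[x]/\langle x^n\rangle$ and $H_n(R)$. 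One final appeal to the preceding lemma completes the proof.

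There is no real obstacle here: the entire content is that the two conditions are invariants of the isomorphism class, and the nontrivial transfer between $H_n(R)$ and $R$ has already been done in the preceding lemma. So the proof reduces to a one-line citation of that lemma together with the isomorphism $R_h[x]/\langle x^n\rangle \cong H_n(R)$.
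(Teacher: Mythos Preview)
Your proposal is correct and matches the paper's approach exactly: the paper states this corollary without proof, relying (as it does for all such corollaries) on the isomorphism \(R_h[x]/\langle x^n\rangle \cong H_n(R)\) from the introduction together with the immediately preceding lemma for \(H_n(R)\). Your observation that weak symmetry and \(J\)-symmetry are isomorphism invariants is precisely what makes the transfer automatic.
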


We recall that a ring is said to be \textit{abelian} if every its idempotent is central.

\medskip

We are now in a position to show the following.

\begin{lemma}
Let \( R \) be a ring. Then, \( H := H_{n}(R) \) is abelian if, and only if, \( R \) is abelian. In particular, if \( R \) is abelian, then each idempotent in \( H \) is of the form \( (eI_{n}) \), where \( e^{2} = e \in R \).
\end{lemma}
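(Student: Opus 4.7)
The plan is to split the biconditional into two halves and, along the way, to classify every idempotent of $H$. The forward implication is immediate because $R$ embeds into $H$ as a subring via $r \mapsto (r, 0, \ldots, 0)$: if $H$ is abelian, then for any idempotent $e \in R$ the element $(e, 0, \ldots, 0)$ is an idempotent of $H$, hence central, and comparing it with $(r, 0, \ldots, 0)$ for arbitrary $r \in R$ forces $er = re$.

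For the converse, I would assume $R$ is abelian, fix an idempotent $A = (e_1, \ldots, e_n) \in H$, and prove by induction on $k \geq 2$ that $e_k = 0$. The first component of $A^2 = A$ gives $e_1^2 = e_1$, so that $e_1$ is a \emph{central} idempotent of $R$. Granting $e_2 = \cdots = e_{k-1} = 0$, the Hurwitz expansion
\[
e_k \;=\; (A^2)(k) \;=\; \sum_{i=0}^{k-1} \binom{k-1}{i}\, e_{i+1}\, e_{k-i}
\]
has every middle summand ($1 \leq i \leq k-2$) killed by the inductive hypothesis, since each such summand contains a factor $e_{i+1}$ with $2 \leq i+1 \leq k-1$. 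Only the two boundary terms $i = 0$ and $i = k-1$ survive and give $e_k = e_1 e_k + e_k e_1 = 2 e_1 e_k$, using the centrality of $e_1$. Multiplying on the left by $e_1$ then yields $2 e_1 e_k = e_1 e_k$, so $e_1 e_k = 0$ and therefore $e_k = 2 e_1 e_k = 0$.

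This classification already delivers the ``in particular'' clause: every idempotent of $H$ equals $e I_n = (e, 0, \ldots, 0)$ for some central idempotent $e \in R$. To finish the converse I need only observe that such an element is central in $H$, which is a one-line consequence of the Hurwitz formula: for any $B = (b_1, \ldots, b_n)$, both $(eI_n)B$ and $B(eI_n)$ have $m$-th component equal to $e b_m = b_m e$. The main obstacle is the inductive step, where one must see past the binomial coefficients to realize that every middle summand vanishes by hypothesis, and then use the centrality of $e_1$ \emph{twice} --- once to collapse $e_1 e_k + e_k e_1$ into $2 e_1 e_k$, and once via the ``multiply by $e_1$'' trick --- so that the conclusion $e_k = 0$ holds in arbitrary characteristic without any $2$-regularity assumption on $R$.
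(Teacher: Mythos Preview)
Your proof is correct and follows essentially the same route as the paper. The only cosmetic difference is the induction variable: the paper inducts on the size $n$ of $H_n(R)$ (using that the first $n-1$ coordinates of an idempotent in $H_n(R)$ form an idempotent in $H_{n-1}(R)$), whereas you induct on the coordinate index $k$ within a fixed $H_n(R)$; in both cases the heart of the step is the identity $e_k = e_1 e_k + e_k e_1$ followed by the ``multiply by the central idempotent $e_1$'' trick to force $e_k = 0$.
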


\begin{proof}
Suppose that \( R \) is abelian. We proceed by induction on \( n \). For \( n = 2 \), let \( A = (e_{1}, e_{2}) \) be an idempotent in \( H \). Thus, we receive the next two equalities:
\begin{eqnarray}
e_{1} &=& e_{1}^{2}, \label{4.4} \\
e_{2} &=& e_{1}e_{2} + e_{2}e_{1}. \label{4.5}
\end{eqnarray}
Since \( R \) is abelian, \( e_{1} \) is a central idempotent. Multiplying (\ref{4.5}) on the left side by \( e_{1} \), we can get \( e_{1}e_{2}e_{1} = 0 \), which shows that \( e_{2} = 0 \). Hence, \( A = (e_{1}, 0) \).

Furthermore, for \( n > 2 \), let \( A = ((e_{1}, e_{2}, \ldots, e_{n-1}), e_{n}) \) be an idempotent in \( H \). Then, we arrive at the next two equalities:
\begin{eqnarray}
(e_{1}, e_{2}, \ldots, e_{n-1}) &=& (e_{1}, e_{2}, \ldots, e_{n-1})^{2}, \notag \\
e_{n} &=& e_{1}e_{n} + e_{2}e_{n-1} + \cdots + e_{n-1}e_{2} + e_{n}e_{1}. \label{4.6}
\end{eqnarray}
By the induction hypothesis, \( e_{i} = 0 \) for all \( 2 \leq i \leq n-1 \), and \( e_{1}^{2} = e_{1} \). From (\ref{4.6}), we have \( e_{1}e_{n} + e_{n}e_{1} = e_{n} \), which means that \( e_{n} = 0 \). Thus, \( A = (e_{1}, 0, \ldots, 0) \) is central in \( H \), as required. 

Conversely, if \( H \) is abelian, then \( R \) is abelian, as subrings of abelian rings are always abelian, as requested.
\end{proof}

\begin{corollary}
Let \( R \) be a ring. Then, \( R_{h}[x] / \langle x^{n} \rangle \) is abelian if, and only if, \( R \) is abelian.
\end{corollary}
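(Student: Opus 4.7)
The plan is to reduce the corollary directly to the preceding lemma via the ring isomorphism $\phi : R_h[x]/\langle x^n\rangle \to H_n(R)$ that was established in the introduction, namely $\phi\bigl(\sum_{i=0}^{n-1}a_i x^i\bigr) = (a_0,a_1,\ldots,a_{n-1})$. Since $\phi$ is a ring isomorphism, it induces a bijection between the idempotents of $R_h[x]/\langle x^n\rangle$ and those of $H_n(R)$, and this bijection preserves centrality: an element $f$ is central in $R_h[x]/\langle x^n\rangle$ if and only if $\phi(f)$ is central in $H_n(R)$. Consequently, being abelian is transferred along $\phi$ in both directions.

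First I would note that the property ``every idempotent is central'' is invariant under ring isomorphism; this is purely formal and requires no computation. Then I would apply the preceding lemma, which states that $H_n(R)$ is abelian if and only if $R$ is abelian. Concatenating these two equivalences yields that $R_h[x]/\langle x^n\rangle$ is abelian if and only if $R$ is abelian.

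There is no real obstacle here: the entire content of the corollary is packaged in the preceding lemma, and the bridge is the isomorphism $R_h[x]/\langle x^n\rangle \cong H_n(R)$ already recorded in the introduction. The proof will therefore be one or two sentences, simply citing the lemma and the isomorphism. If one wished to be more explicit, one could additionally remark that, under $\phi$, an idempotent of $R_h[x]/\langle x^n\rangle$ corresponds, by the ``in particular'' clause of the preceding lemma, to an element of the form $(eI_n)$ with $e^2 = e \in R$; translated back, this means every idempotent of $R_h[x]/\langle x^n\rangle$ is of the form $e + \langle x^n\rangle$ with $e^2 = e \in R$, which manifestly lies in the center of $R_h[x]/\langle x^n\rangle$ whenever $e$ is central in $R$.
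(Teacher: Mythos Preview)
Your proposal is correct and matches the paper's approach exactly: the corollary is stated without proof in the paper, so its intended justification is precisely the isomorphism $R_h[x]/\langle x^n\rangle \cong H_n(R)$ from the introduction combined with the preceding lemma. Your additional remark about the explicit form of idempotents is a nice bonus but not needed.
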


Next, imitating Nicholson (cf. \cite{11}), a ring \( R \) is called \textit{clean} if any element \( r \in R \) can be written as \( r = u + e \), where \( u \in U(R) \) and \( e^{2} = e \in R \). Recall from \cite{5} that a ring \( R \) is said to be \textit{nil-clean} if, for any \( r \in R \), \( r = q + e \), where \( q \) is nilpotent and \( e^{2} = e \in R \). Also, referring to V{\'a}mos \cite{14}, a ring \( R \) is termed \textit{2-good} if every element in $R$ is a sum of two units of $R$.

\medskip

So, we have the following.

\begin{proposition}
Let \( R \) be a ring and \( H := H_{n}(R) \). Then:\\
\text{(1)}\;\; \( H \) is a clean ring if, and only if, \( R \) is a clean ring.\\
\text{(2)}\;\; \( H \) is a nil-clean ring if, and only if, \( R \) is a nil-clean ring.\\
\text{(3)}\;\; \( H \) is a 2-good ring if, and only if, \( R \) is a 2-good ring.
\end{proposition}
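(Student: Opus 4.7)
The plan is to treat all three equivalences in parallel, since each follows the same two-step template. The forward (\emph{only if}) direction in every case is immediate from the observation that the first-coordinate map $\pi \colon H \to R$, sending $(a_1, a_2, \ldots, a_n)$ to $a_1$, is a surjective unital ring homomorphism with kernel $A = \{(0, a_2, \ldots, a_n) : a_i \in R\}$. Since $\pi$ carries units to units, idempotents to idempotents, and preserves nilpotence, if $H$ is clean (resp.\ nil-clean, 2-good) then any desired decomposition of $\pi(h) \in R$ is obtained by applying $\pi$ to a decomposition of $h$. Equivalently, one can simply cite Lemma \ref{2.7} with $I = A$ to get $H/A \cong R$ and then use quotient-closure of each of the three properties.

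For the reverse (\emph{if}) direction, the plan is to lift decompositions from $R$ to $H$ by isolating three building blocks already available in the paper: (i)~Lemma \ref{2.1}, giving $U(H) = (U(R), R, \ldots, R)$; (ii)~the elementary fact that whenever $e^2 = e$ in $R$, the tuple $(e, 0, 0, \ldots, 0)$ is an idempotent of $H$ (a short direct computation from the Hurwitz convolution, since every off-diagonal term is killed by a zero factor); and (iii)~the fact that if $q$ is nilpotent in $R$ then $(q, b_2, \ldots, b_n)$ is nilpotent in $H$ for any $b_2, \ldots, b_n \in R$. For (iii), the image of $(q, b_2, \ldots, b_n)$ in $H/A \cong R$ is $q$, hence nilpotent, so some power of the tuple lies in $A$; the nilpotence of the tuple then follows from $A^n = 0$, an identity already recorded in the proof of Theorem \ref{2.3}.

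Granted (i)--(iii), each reverse implication becomes a single line. Given $(a_1, a_2, \ldots, a_n) \in H$: for cleanness, decompose $a_1 = u + e$ in $R$, set $E = (e, 0, \ldots, 0)$, and note that $(a_1, a_2, \ldots, a_n) = (u, a_2, \ldots, a_n) + E$, where the first summand is a unit of $H$ by (i) and $E$ is an idempotent of $H$ by (ii); for nil-cleanness, decompose $a_1 = q + e$ and use the identical split, invoking (iii) in place of (i) to see that $(q, a_2, \ldots, a_n)$ is nilpotent; for 2-goodness, decompose $a_1 = u_1 + u_2$ and split as $(u_1, a_2, \ldots, a_n) + (u_2, 0, \ldots, 0)$, both units of $H$ by (i). Of these, the only step that is not completely formal is the nilpotence statement (iii); however, the argument above via $H/A \cong R$ and $A^n = 0$ reduces it to a one-line verification, so I do not anticipate a serious obstacle.
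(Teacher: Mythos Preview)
Your proposal is correct and follows essentially the same approach as the paper: the paper also lifts a decomposition of the first coordinate $a_1$ to all of $H$ via Lemma~\ref{2.1} and the idempotent $(e,0,\ldots,0)$, and for the converse it projects a decomposition in $H$ back to $R$ through the first coordinate, exactly your map $\pi$. Your treatment is in fact slightly more careful than the paper's, which disposes of the nil-clean and 2-good cases as ``totally analogous''; in particular, your justification of (iii) via $H/A \cong R$ and $A^n = 0$ makes explicit the nilpotence step that the paper leaves implicit.
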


\begin{proof}
We prove the clean case only, because the evidences for the other cases are totally analogous. To that target, suppose \( R \) is clean, and given \( A = (a_{1}, a_{2}, \ldots, a_{n}) \in H \). Since \( R \) is clean, there exist \( u \in U(R) \) and \( e = e^{2} \in R \) such that \( a_{1} = u + e \). Thus, one may decompose \( A = (u, a_{2}, \ldots, a_{n}) + (e, 0, \ldots, 0) \). Apparently, \( (e, 0, \ldots, 0) \) is an idempotent, and \( (u, a_{2}, \ldots, a_{n}) \in U(H) \) in view of Lemma \ref{2.1}. Therefore, \( H \) is clean.

Conversely, assume \( H \) is clean. For each \( r \in R \), we can write \[ (r, 0, \ldots, 0) = (u, \ast, \ldots, \ast) + (e, \ast, \ldots, \ast) ,\] where \( (u, \ast, \ldots, \ast) \in U(H) \) and \( (e, \ast, \ldots, \ast)^{2} = (e, \ast, \ldots, \ast) \). So, \( r = u + e \), where \( u \in U(R) \) and \( e^{2} = e \in R \), completing the arguments.
\end{proof}

We finish our work with the following consequence.

\begin{corollary}
Let \( R \) be a ring. Then:\\
\text{(1)}\;\; \( R_{h}[x] / \langle x^{n} \rangle \) is a clean ring if, and only if, \( R \) is a clean ring.\\
\text{(2)}\;\; \( R_{h}[x] / \langle x^{n} \rangle \) is a nil-clean ring if, and only if, \( R \) is a nil-clean ring.\\
\text{(3)}\;\; \( R_{h}[x] / \langle x^{n} \rangle \) is a 2-good ring if, and only if, \( R \) is a 2-good ring.
\end{corollary}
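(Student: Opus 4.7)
The plan is to deduce the corollary directly from the isomorphism $R_{h}[x]/\langle x^{n} \rangle \cong H_{n}(R)$ established in the introduction, together with the preceding Proposition which characterizes exactly when $H_{n}(R)$ is clean, nil-clean, or $2$-good. Since cleanness, nil-cleanness, and $2$-goodness are all ring-theoretic properties preserved under ring isomorphism (each being defined purely in terms of the additive/multiplicative structure, units, idempotents, and nilpotents of the ring), transporting the Proposition across the isomorphism $\phi$ is immediate.

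More concretely, for part (1) I would argue: $R_{h}[x]/\langle x^{n} \rangle$ is clean iff $H_{n}(R)$ is clean (by the isomorphism) iff $R$ is clean (by part (1) of the preceding Proposition). Parts (2) and (3) follow by the same two-step chain, using parts (2) and (3) of the Proposition respectively.

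The proof therefore reduces to a one-line invocation, and there is essentially no technical obstacle — the only thing worth noting is that cleanness, nil-cleanness, and $2$-goodness are indeed invariants of the ring structure, which is clear from their definitions in terms of $U(R)$, idempotents of $R$, and $\mathrm{Nil}(R)$, all of which are preserved by any ring isomorphism.

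\begin{proof}
By the ring isomorphism $R_{h}[x]/\langle x^{n} \rangle \cong H_{n}(R)$ established in the introduction, the ring $R_{h}[x]/\langle x^{n} \rangle$ is clean (resp., nil-clean, $2$-good) if, and only if, $H_{n}(R)$ is clean (resp., nil-clean, $2$-good), as all three properties are preserved under ring isomorphism. The result now follows at once from parts (1), (2), and (3) of the preceding Proposition.
\end{proof}
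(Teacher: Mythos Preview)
Your argument is correct and matches the paper's approach: the corollary is stated without proof there, as it follows immediately from the preceding Proposition via the isomorphism $R_{h}[x]/\langle x^{n}\rangle \cong H_{n}(R)$ established in the introduction. There is nothing to add.
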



\medskip
\medskip
\medskip
\medskip
\medskip
\medskip

\noindent{\bf Funding:} The work of the first-named author, P.V. Danchev, is partially supported by the Spanish project Junta de Andaluc\'ia under Grant FQM 264.

\vskip4.0pc

\end{document}